\theoremstyle{plain}
\newtheorem{master}{Master}[section]
\newtheorem{prop}[master]{Proposition}
\newtheorem{thm}[master]{Theorem}
\newtheorem{fact}[master]{Fact}
\newtheorem{lem}[master]{Lemma}
\newtheorem{cor}[master]{Corollary}
\newtheorem{question}[master]{Question}
\newtheorem{problem}[master]{Problem}
\theoremstyle{definition}
\newtheorem{defin}[master]{Definition}
\theoremstyle{remark}
\newtheorem{remark}[master]{Remark}
\numberwithin{equation}{section}
\newcommand{\Ur}{\mathbb{U}}
\newcommand{\Rea}{\mathbb{R}}
\newcommand{\Nat}{\mathbb{N}}
\newcommand{\Rat}{\mathbb{Q}}
\newcommand{\Int}{\mathbb{Z}}
\newcommand{\Age}{\mathcal{K}}
\newcommand{\Iso}{\mathrm{Iso}}
\newcommand{\Act}{\mathrm{Act}}
\newcommand{\Hom}{\mathrm{Hom}}
\newcommand{\Norm}{\|\cdot\|}
\newcommand{\GL}{\mathrm{GL}}
\newcommand{\Class}{\mathcal{C}}
\begin{document}
\title[Universal actions]{Universal actions and representations of locally finite groups on metric spaces}
\author{Michal Doucha}
\address{Institute of Mathematics, Academy of Sciences, Prague, Czech republic}
\keywords{group actions, actions by isometries, universal action, amalgamation of actions, Hall's group, Urysohn space, metric Fra\" iss\' e theory}
\subjclass[2010]{20E06, 20F50, 20K10, 22F05, 54E50, 46B04}
\email{doucha@math.cas.cz}
\thanks{The author was supported by the GA\v CR project 16-34860L and RVO: 67985840.}
\begin{abstract}
We construct a universal action of a countable locally finite group (the Hall's group) on a separable metric space by isometries. This single action contains all actions of all countable locally finite groups on all separable metric spaces as subactions. The main ingredient is the amalgamation of actions by isometries. We show that an equivalence class of this universal action is generic.

We show that the restriction to locally finite groups in our results is necessary as analogous results do not hold for infinite non-locally finite groups.

We discuss the problem also for actions by linear isometries on Banach spaces.
\end{abstract}
\maketitle
\section*{Introduction}
Groups acting by isometries on metric and Banach spaces are one of the active areas of research in geometry, group theory and functional analysis. In this paper, we are interested in amalgamation of group actions and constructing universal actions on metric spaces and universal representations in Banach spaces. It is well known from the beginnings of combinatorial group theory that one can construct an amalgam of two groups over some common subgroup. At least as old is the amalgamation of metric spaces, or amalgamation of normed vector spaces. However, to the best of our knowledge, nobody has considered yet amalgamation of actions of groups on metric or Banach spaces by isometries. In metric geometry or functional analysis, amalgamation techniques are often used to construct various universal metric or Banach spaces (consider for instance the Urysohn universal metric space \cite{Ur}, or the Gurarij universal Banach space \cite{Gu}). The well-known Hall's universal locally finite group (\cite{Ha}) is essentially made by amalgamating finite groups. Here by amalgamating actions of finite groups on finite metric spaces by isometries we obtain the following result.
\begin{thm}\label{intro_mainthm}
There exists a universal action of the Hall's locally finite group $G$ on the Urysohn space $\Ur$ by isometries. That is, for any action of a countable locally finite group $H$ on a separable metric space $X$ by isometries, there exists a subgroup $H'\leq G$ isomorphic to $H$ such that, after identifying $H$ and $H'$, there is an $H$-equivariant isometric embedding of $X$ into $\Ur$.
\end{thm}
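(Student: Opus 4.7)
The approach is to realise the action as the limit of a metric Fra\"iss\'e-type construction applied to the class of isometric actions of finite groups on finite rational metric spaces. Let $\Age$ denote the class of triples $(G,X,\alpha)$ in which $G$ is a finite group, $X$ is a finite rational metric space, and $\alpha\colon G\curvearrowright X$ is an action by isometries; morphisms are pairs of an injective group homomorphism together with an equivariant isometric embedding. This class is countable up to isomorphism, hereditary, and satisfies joint embedding via the product group $G_1\times G_2$ acting factorwise on the disjoint union $X_1\sqcup X_2$ equipped with a large constant distance between the pieces.

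The critical step is to verify the amalgamation property. Given a diagram $(G_1,X_1,\alpha_1)\hookleftarrow(G_0,X_0,\alpha_0)\hookrightarrow(G_2,X_2,\alpha_2)$ in $\Age$, I plan to proceed in two stages. First, amalgamate the groups: by the strong amalgamation property for finite groups (B.\,H.\ Neumann), there is a finite group $K$ containing both $G_1$ and $G_2$ with $G_1\cap G_2=G_0$ inside $K$; this is the same input Hall used to build his universal group. Second, construct a finite rational metric space $Z$ carrying an isometric $K$-action into which each $(G_i,X_i,\alpha_i)$ equivariantly embeds. A natural candidate is the $K$-set $Z=(K\times_{G_1}X_1)\cup_{K\cdot X_0}(K\times_{G_2}X_2)$ obtained by identifying the two induced representations along the common $K$-orbit of $X_0$, equipped with the largest $K$-invariant metric extending the free metric amalgam of $X_1$ and $X_2$ over $X_0$ (computed via infimum of path lengths through $X_0$). \emph{The main obstacle is verifying that this construction yields a genuine metric restricting isometrically to each $X_i$}: one must check the triangle inequality across all coset translates, rule out pathological identifications forced by the $K$-action, and finally rationalise the finitely many new distances by a small perturbation preserving triangle inequalities. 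This is what distinguishes the present setting from either pure group or pure metric amalgamation.

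With amalgamation in hand, the Fra\"iss\'e construction produces a countable homogeneous limit $(G,Y,\alpha)$ consisting of a countable group $G$ acting by isometries on a countable rational metric space $Y$. Forgetting the metric, $G$ is the Fra\"iss\'e limit of the class of finite groups with embeddings, hence by uniqueness it is Hall's universal locally finite group; forgetting the group structure, $Y$ is the Fra\"iss\'e limit of finite rational metric spaces, so its completion is the Urysohn space $\Ur$, and the $G$-action extends by uniform continuity. For universality, let $H$ be a countable locally finite group acting on a separable metric space $X$; writing $H=\bigcup_n H_n$ with each $H_n$ finite, one exhausts a countable dense $H$-invariant subset of $X$ by finite $H_n$-invariant subsets $D_n$, adjusts distances to rationals by a standard equivariant perturbation (permissible since the action is isometric and each orbit of pairs carries a single distance value), and uses the extension property of the Fra\"iss\'e limit to build equivariant isometric embeddings $(H_n,D_n)\hookrightarrow(G,Y)$ compatibly in $n$; the direct limit extends by continuity to the desired $H$-equivariant embedding into $\Ur$.
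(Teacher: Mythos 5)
Your overall architecture matches the paper's: a Fra\"iss\'e class of actions of finite groups on finite rational metric spaces, whose limit is an action of Hall's group on the rational Urysohn space, completed to $\Ur$, with universality obtained from the extension property. But the step you yourself flag as ``the main obstacle'' is exactly where the proposal breaks down, and not merely because the verification is omitted: your two-stage plan --- first pick a finite amalgam $K\geq G_1,G_2$ over $G_0$ by Neumann's theorem, then put the largest $K$-invariant path metric on the glued $K$-set --- does not work for an arbitrary choice of $K$. The reason is that in a generic finite amalgam there are unintended coincidences: products $h_1h_2\cdots h_n$ with the $h_l$ alternating between $G_1\setminus G_0$ and $G_2\setminus G_0$ can land back in $G_1$ (indeed in $G_0$) even when they would be nontrivial reduced words in the free product with amalgamation. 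Each such coincidence creates a short edge-path in your weighted graph between two points of $X_1$, and the infimum-of-paths metric can then be strictly smaller than $d_{X_1}$, so $X_1$ does not embed isometrically. The triangle-inequality/no-collapse check you defer is precisely the check that fails. The paper avoids this by working in two genuinely different stages: it first proves the metric amalgamation over the \emph{infinite} group $G_1\ast_{G_0}G_2$, where the normal form for reduced words drives an induction on path length showing the graph metric restricts correctly to $X_1$ and $X_2$; only then does it pass to a finite group, using residual finiteness of $G_1\ast_{G_0}G_2$ (Baumslag) to find a finite quotient injective on a ball of radius $K+1$ with $K=\lceil M/m\rceil$ determined by the maximal and minimal edge weights, so that every potentially distance-shortening path in the finite quotient lifts to one in the free amalgam. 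The choice of finite group thus depends on the metric data, not only on $G_0\leq G_1,G_2$; this is the missing idea.

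Two further, smaller gaps. First, the paper restricts to \emph{pointed free} actions; freeness is what makes the residual-finiteness argument suffice (for non-free actions one would need the stronger Ribes--Zalesski\v\i{} property of the amalgam, as the paper remarks), and general actions are then handled in the universality proof by passing to pseudometrics and quotienting. Your class of arbitrary finite actions skips this. Second, in the universality argument the rational perturbations of the metrics on $D_n$ and $D_{n+1}$ need not agree on $D_n$, so the embeddings cannot be taken literally compatible in $n$; one needs a gluing lemma placing the two approximations at controlled Hausdorff distance inside a single finite action and then a Cauchy-sequence argument in $\Ur$, as in the paper's Lemmas \ref{rational_approx_lem} and \ref{glue_lem}. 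This part is fixable by standard means, but as written the ``compatibly in $n$'' claim is not justified.
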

The meaning of the theorem is that there is a single action of a countable locally finite group on a separable metric space by isometries that contains all actions of all countable locally finite groups on all separable metric spaces as subactions.

One of the main ingredients is the amalgamation of actions and we have the following general theorem.
\begin{thm}
Let $G_1,G_2$ be two groups (countable or not) with a common subgroup $G_0$. Suppose that $G_1$ acts on a metric space $X_1$ and $G_2$ acts on $X_2$, by isometries in both cases. Let $X_0$ be a common $G_0$-invariant subspace of $X_1$ and $X_2$, i.e. the restrictions of the two actions on $G_0$ and $X_0$ coincide. Then there is an amalgam of the action, which is an action of $G_1\ast_{G_0} G_2$ on a metric space of density character $\max\{|G_1\ast_{G_0} G_2|,\mathrm{dens}(X_1),\mathrm{dens}(X_2)\}$.
\end{thm}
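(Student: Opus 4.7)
Let $\Gamma := G_1 \ast_{G_0} G_2$ be the amalgamated free product and let $T$ be its Bass--Serre tree, with type-$i$ vertices having stabilizers conjugate to $G_i$ and edge stabilizers conjugate to $G_0$. The plan is to build a \emph{tree of spaces} over $T$: one copy of $X_i$ at each vertex of type $i$, glued along a copy of $X_0$ to the adjacent vertex-piece. Concretely, set
\[
Y := \bigl((\Gamma \times X_1) \sqcup (\Gamma \times X_2)\bigr)/\sim,
\]
where $\sim$ is generated by $(\gamma h, x) \sim (\gamma, hx)$ for $h \in G_i$, $x \in X_i$, and $(\gamma, x_0)_1 \sim (\gamma, x_0)_2$ for $x_0 \in X_0$. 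Then $\Gamma$ acts on $Y$ by left multiplication on the first coordinate, each piece $P_{\gamma G_i}$ is canonically bijective with $X_i$, and two distinct pieces $P_v, P_{v'}$ meet in a $G_0$-translate of $X_0$ exactly when $v, v'$ are adjacent in $T$, and are disjoint otherwise.

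I equip $Y$ with the \emph{zig-zag pseudometric}
\[
d(p, q) := \inf \sum_{k=0}^{n-1} d_{P_{\pi_k}}(p_k, p_{k+1}),
\]
where the infimum ranges over finite sequences $p = p_0, \ldots, p_n = q$ in $Y$ such that each consecutive pair lies in a common piece $P_{\pi_k}$, carrying the metric transported from $X_1$ or $X_2$. Symmetry, the triangle inequality, and $\Gamma$-invariance are immediate from the formula. The base case $\Gamma = G_0$ is the classical pushout of two metric spaces along a common subspace, with metric $d(p, q) = \inf_{x_0 \in X_0}\bigl(d_1(p, x_0) + d_2(x_0, q)\bigr)$ for $p \in X_1$, $q \in X_2$; this is well known to be a metric extending both $d_1$ and $d_2$.

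The crux is \emph{non-collapse}: $d$ restricted to any piece $P_{\gamma G_i}$ must coincide with the original $d_i$, so that $d$ is a genuine metric (or becomes one after quotienting by the zero-distance equivalence) and each $X_i$ embeds isometrically. I would prove this by induction on the distance in $T$ between the pieces traversed by a zig-zag sequence: because $T$ is a tree, any sequence from $p \in P_v$ to $q \in P_v$ that leaves $P_v$ across an edge $e$ and later returns must re-enter through the same edge $e$, hence through the same $X_0$-copy $P_v \cap P_{v'}$. The triangle inequality applied along the excursion, combined with the two-piece pushout lemma above, shows such an excursion can be replaced by a segment inside $P_v$ of no greater length; iterating reduces any infimizing sequence to one lying entirely in $P_v$, yielding $d(p, q) \geq d_i(p, q)$. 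Finally, the density of $Y$ is at most $|\Gamma| \cdot \max\{\mathrm{dens}(X_1), \mathrm{dens}(X_2)\} = \max\{|\Gamma|, \mathrm{dens}(X_1), \mathrm{dens}(X_2)\}$, as claimed.

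The \textbf{main obstacle} is the trajectory-reduction step, where the free amalgamated structure (equivalently, the tree structure of $T$ and the uniqueness of reduced edge-paths) is essential: if the amalgamation were not free, zig-zag sequences could traverse genuine cycles in a graph of groups and really shorten distances within a single piece, destroying non-collapse.
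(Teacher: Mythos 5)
Your proposal is correct and follows essentially the paper's own argument: the paper likewise endows the union of $\Gamma$-translates of the orbit representatives with a path (chain) metric whose edges are pairs of points lying in a common translate of $X_1$ or $X_2$, and proves non-collapse by induction on the length of a path, showing that an excursion out of $X_1$ must exit and re-enter through points of the same translate of $X_0$ --- which is exactly your Bass--Serre tree observation, phrased there via normal forms in $G_1\ast_{G_0}G_2$. The only organizational difference is that the paper carries out the argument for free pointed actions and disposes of the general case in a remark (pass to a pseudometric making the action free, then take the metric quotient), whereas your induced-space construction $\bigl((\Gamma\times X_1)\sqcup(\Gamma\times X_2)\bigr)/\sim$ together with the zero-distance quotient handles arbitrary actions in one pass.
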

Following the research of Rosendal in \cite{Ro} and of Glasner, Kitroser and Melleray in \cite{GKM} we investigate the genericity of the universal action from Theorem \ref{intro_mainthm}. 
\begin{thm}
The universal action from Theorem \ref{intro_mainthm} is \emph{weakly generic} in some sense. That is, the set of those actions in the Polish space $\Hom(G,\Iso(\Ur))$ that are naturally equivalent to the universal one is dense $G_\delta$.
\end{thm}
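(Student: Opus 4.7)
The plan is to work in $\Hom(G,\Iso(\Ur))$ with its standard Polish topology (pointwise convergence of the isometries $\alpha(g)$ on $\Ur$) and to identify the natural equivalence class of the universal action $\alpha^{*}$ from Theorem~\ref{intro_mainthm} with its $\Iso(\Ur)$-conjugacy orbit. For density, I fix a nonempty basic open neighborhood $U$ determined by finitely many $g_1,\dots,g_n\in G$, points $x_1,\dots,x_m\in\Ur$, prescribed targets, and a tolerance $\varepsilon>0$. Since $G$ is locally finite, the $g_i$ generate a finite subgroup $F\leq G$; closing the test points under this (approximate) $F$-action produces a finite $F$-invariant metric datum $A$. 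Universality of $\alpha^{*}$ gives an $F$-equivariant isometric embedding of $A$ into $(\Ur,\alpha^{*})$, and ultrahomogeneity of $\Ur$ promotes the resulting partial isometry to a global $\phi\in\Iso(\Ur)$. Then $\phi\,\alpha^{*}\phi^{-1}$ lies in $U$, which shows that the conjugacy class of $\alpha^{*}$ meets every nonempty open set.

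For the $G_\delta$ property I would encode a Fra\"iss\'e-style extension property. Fix a countable dense subset $D\subseteq\Ur$ and a countable family of finite rational-valued equivariant metric spaces $A\subseteq B$ over finite subgroups $F\leq G$ (a ``dense'' subfamily of the class of one-point equivariant extensions). For each such pair, each approximate realization $\bar a\in D^{|A|}$, and each rational $\varepsilon>0$, declare $O_{A,B,\bar a,\varepsilon}$ to be the set of those $\alpha\in\Hom(G,\Iso(\Ur))$ admitting a tuple $\bar b$ in $D^{|B|}$ that realizes $B$ inside $\alpha$ to within $\varepsilon$ and extends $\bar a$. Each $O_{A,B,\bar a,\varepsilon}$ depends only on finitely many values of the form $\alpha(g)(d)$ up to $\varepsilon$, hence is open in the pointwise topology. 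Let $E$ be the intersection of all these sets; then $E$ is $G_\delta$.

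The decisive step is to prove that $E$ coincides with the conjugacy orbit of $\alpha^{*}$. That $\alpha^{*}\in E$ is a direct application of the amalgamation theorem above: given the approximate realization of $A$, amalgamate with the extension $A\subseteq B$ to obtain a new equivariant finite metric space, then embed it back into $(\Ur,\alpha^{*})$ using universality and ultrahomogeneity. Invariance of $E$ under conjugation by $\Iso(\Ur)$ is immediate. The converse---that any $\alpha\in E$ is $\Iso(\Ur)$-conjugate to $\alpha^{*}$---will be obtained by a back-and-forth argument: exhaust $G=\bigcup_n F_n$ by an increasing chain of finite subgroups, fix countable dense subsets of $\Ur$ with respect to both $\alpha$ and $\alpha^{*}$, and at alternating stages use the extension property of one action to match the next listed point of the other, using amalgamation to glue the new point to the already-constructed partial equivariant isometry. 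The main obstacle will be arranging these finite approximate matches so that they converge to a genuine surjective isometry of $\Ur$ intertwining the two actions; this is the customary Fra\"iss\'e uniqueness argument, but complicated here by the need to synchronize a growing group with a growing metric space, with amalgamation (Theorem~0.2) supplying the single combinatorial engine that powers every step.
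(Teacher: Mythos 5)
There is a genuine gap, and it is located exactly where you identify ``the natural equivalence class'' with the $\Iso(\Ur)$-conjugacy orbit of $\alpha^{*}$. The equivalence the theorem refers to is the \emph{weak} equivalence defined in the paper: two homomorphisms $f,g\colon G\to\Iso(\Ur)$ are equivalent if there exist \emph{both} an autoisometry $\phi$ of $\Ur$ \emph{and} an automorphism $\psi$ of the Hall group $G$ with $f(v)x=\phi^{-1}g(\psi(v))\phi x$. This is not cosmetic: by a result of Melleray (cited in the paper) the plain conjugacy class of \emph{every} action of a countably infinite group on $\Ur$ is meager, whereas a dense $G_\delta$ subset of the Polish space $\Hom(G,\Iso(\Ur))$ is comeager. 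So your final claim that the $G_\delta$ set $E$ ``coincides with the conjugacy orbit of $\alpha^{*}$'' cannot be true for any choice of the sets $O_{A,B,\bar a,\varepsilon}$, and the proposed back-and-forth cannot be made to output a plain conjugacy. Concretely, the step that fails is the synchronization you flag as ``the main obstacle'': when you alternately extend the partial matching, the finite subgroup of $G$ realized on the $\alpha$ side at stage $n$ need not be matched to \emph{itself} on the $\alpha^{*}$ side, only to an isomorphic copy; the union of these finite isomorphisms is a (generally nontrivial) automorphism of $G$, and there is no way to force it to be the identity.

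Correspondingly, your $G_\delta$ condition has to be phrased with the group side left floating: the paper's $D$-property quantifies over \emph{all subgroups $F'\leq G$ isomorphic to $F$} together with approximate realizations of the configuration, and asks for an extension to some supergroup $H'\geq F'$ inside $G$ isomorphic to $H$ --- not for the fixed abstract $H\geq F$ to act correctly. Your density argument is essentially right in outline (close the test data under a finite subgroup, rationalize the pseudometric via the approximation lemma, realize inside $\alpha^{*}$, and move it into place by homogeneity of $\Ur$), but in the paper this step also uses the homogeneity of the Hall group to supply an automorphism of $G$ carrying the finite subgroup onto the copy realized in $\alpha^{*}$, which is what makes the argument land in the given neighborhood while only establishing density of the \emph{weak} equivalence class. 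Once the equivalence is corrected, the rest of your scheme (open conditions indexed by a countable Fra\"iss\'e class, a back-and-forth between any two members of $E$ building simultaneously a chain of finite subgroups with coherent isomorphisms and Cauchy sequences of points dense in $\Ur$) matches the paper's proof.
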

As a consequence, we derive the following result which was originally asked by Melleray and Tsankov (in \cite{MeTs}) for abelian groups.
\begin{thm}
There exists a Polish group $\mathbb{H}$ such that for comeager many actions $\alpha\in\Hom(G,\Iso(\Ur))$ we have that the closure $\overline{\alpha[G]}$ is topologically isomorphic to $\mathbb{H}$.
\end{thm}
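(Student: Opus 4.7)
The plan is to take $\mathbb{H}$ to be the closure $\overline{\alpha_0[G]}$ inside $\Iso(\Ur)$, where $\alpha_0\colon G\to\Iso(\Ur)$ is the universal action produced in Theorem~\ref{intro_mainthm}. Once this choice is made, the theorem should follow from the preceding genericity statement together with the elementary observation that conjugation by an element of $\Iso(\Ur)$ is a topological automorphism, hence carries closed subgroups to topologically isomorphic closed subgroups.

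The first step is to pin down the notion of ``natural equivalence'' used in the preceding theorem. Following Rosendal and Glasner--Kitroser--Melleray, two actions $\alpha,\beta\in\Hom(G,\Iso(\Ur))$ should be naturally equivalent exactly when they are conjugate inside $\Iso(\Ur)$, i.e.\ when there exists $\phi\in\Iso(\Ur)$ with $\beta(g)=\phi\,\alpha(g)\,\phi^{-1}$ for every $g\in G$; this is the correct notion because any $G$-equivariant isometry between two separable $G$-invariant subspaces of $\Ur$ extends, by the universality and homogeneity of the Urysohn space, to an isometry of $\Ur$ itself, and such an extension realizes the conjugation. Granting this, whenever $\alpha$ is naturally equivalent to $\alpha_0$ one has $\alpha[G]=\phi\,\alpha_0[G]\,\phi^{-1}$, and because left and right multiplication by $\phi$ are homeomorphisms of $\Iso(\Ur)$, taking closures gives $\overline{\alpha[G]}=\phi\,\mathbb{H}\,\phi^{-1}$; the inner automorphism $x\mapsto\phi x\phi^{-1}$ restricts to a topological group isomorphism of $\mathbb{H}$ onto $\overline{\alpha[G]}$.

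The preceding theorem says that the set of such $\alpha$ is comeager in $\Hom(G,\Iso(\Ur))$, so on a comeager set the topological group $\overline{\alpha[G]}$ is constantly isomorphic to $\mathbb{H}$, which is the statement to be proved. The only real point where one needs to be careful is the identification of ``natural equivalence'' with conjugacy in $\Iso(\Ur)$; if the paper's definition is formally phrased in terms of $G$-equivariant isometries between the orbit closures rather than conjugation in $\Iso(\Ur)$, I would insert a short lemma showing that such an equivariant isometry always extends to a full isometry of $\Ur$ by a back-and-forth argument over a countable dense $G$-invariant subset, and then apply the conjugation argument as above. I do not anticipate any substantive obstacle beyond this bookkeeping step, since the heavy lifting has already been carried out in the genericity theorem.
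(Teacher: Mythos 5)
Your overall strategy --- take $\mathbb{H}=\overline{\alpha_0[G]}$ for the universal action $\alpha_0$ and transport it along the equivalence supplied by the genericity theorem --- is exactly the paper's route (Theorem \ref{thm_generic} combined with Proposition \ref{prop_isom}). However, there is one genuine error in your identification of the equivalence relation. You assert that ``natural equivalence'' means conjugacy inside $\Iso(\Ur)$. It cannot: the paper notes (citing Melleray, and independently \cite{DoMa}) that the conjugacy class of \emph{every} action of a countably infinite group on $\Ur$ is meager, so a genericity statement for plain conjugacy would be false. The relation actually used is \emph{weak} equivalence, which allows, in addition to conjugation by an autoisometry $\phi$ of $\Ur$, precomposition with an automorphism $\psi$ of $G$, i.e.\ $\alpha(v)x=\phi^{-1}\beta(\psi(v))\phi x$. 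Your proposed lemma about extending $G$-equivariant isometries of orbit closures to autoisometries of $\Ur$ would only ever recover conjugacy, so it does not capture the relation for which comeagerness is actually proved.

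Fortunately the gap is easy to close, and your conjugation argument survives essentially intact: since $\psi$ is an automorphism of $G$, it does not change the image of the homomorphism, $\beta[\psi(G)]=\beta[G]$, so weak equivalence still yields $\alpha[G]=\phi^{-1}\beta[G]\phi$ and hence $\overline{\alpha[G]}=\phi^{-1}\,\overline{\beta[G]}\,\phi$, which is topologically isomorphic to $\overline{\beta[G]}$ via the restriction of the inner automorphism of $\Iso(\Ur)$. This is in fact somewhat more direct than the paper's Proposition \ref{prop_isom}, which instead constructs the isomorphism from the group automorphism by passing to the quotients by the kernels $N_\beta$ and $N_\gamma$ and checking continuity by hand; both arguments prove the same proposition. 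With the equivalence relation corrected as above, your proof of the corollary is complete.
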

We show that the restriction to locally finite groups in our results is essential.
\begin{thm}
There are no analogously universal actions of infinite groups that are not locally finite.
\end{thm}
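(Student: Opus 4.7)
\emph{Plan.} I would argue by contradiction. Assume $\Gamma$ is a countable, infinite, non-locally-finite group and, in analogy with Theorem~\ref{intro_mainthm}, that there is a countable group $U\supseteq\Gamma$ and a universal isometric action $\alpha\colon U\to\Iso(X)$ on a separable metric space $X$: every isometric action of $\Gamma$ on a separable metric space embeds $\Gamma$-equivariantly into $\alpha$ after a suitable identification of $\Gamma$ with a subgroup of $U$.

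The first step is a reduction. Pick a finitely generated infinite subgroup $H\leq\Gamma$, whose existence is exactly the failure of local finiteness. Each copy of $H$ inside $U$ inherits universality by restriction, so it suffices to rule out a universal isometric action of $H$ on a separable metric space. The reduction gives a finitely generated group whose space $\mathcal{P}(H)$ of proper left-invariant length functions is a Polish space of cardinality $2^{\aleph_0}$, with each point corresponding to a pairwise non-isomorphic single-orbit isometric $H$-action on $(H, d_\ell)$ by left translation.

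The heart of the argument is a diagonal construction against the separability of $X$ using the amalgamation theorem of the paper. Fix a countable dense $D=\{d_n\}\subset X$ and an enumeration $\{h_n\}$ of $H$, and consider the $2$-Lipschitz displacement map $\Phi\colon X\to\Rea^H$ given by $\Phi(x)(h):=d(x,\alpha(h)(x))$. Universality forces $\mathcal{P}(H)\subseteq\Phi(X)$. I would inductively construct a proper length function $\ell^\ast\in\mathcal{P}(H)$, together with a separable $H$-action realizing $(H, d_{\ell^\ast})$, so that $\ell^\ast\notin\Phi(X)$: at stage $n$ pick a fresh $h_{k_n}\in H$ and choose $\ell^\ast(h_{k_n})$ isolated by a positive margin $\varepsilon_n$ from $\{\Phi(d_m)(h_{k_n}) : m\leq n\}$, then invoke the amalgamation theorem to extend the partial length function to an $H$-invariant pseudometric consistent with all earlier constraints. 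For any candidate base-point $x\in X$, density of $D$ places $x$ arbitrarily close to some $d_m$, but the $2$-Lipschitz bound on $\Phi$ combined with the isolation margin at $h_{k_n}$ then forces $\Phi(x)(h_{k_n})\neq\ell^\ast(h_{k_n})$; this contradicts universality.

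The main obstacle is the isolation step. When $\Phi(D)$ is dense in $\mathcal{P}(H)$, which a sufficiently clever universal candidate $\alpha$ may well arrange, no coordinate offers an isolated choice of value out of the box; one therefore needs a careful selection of the sequence $(h_{k_n})$ and the margins $\varepsilon_n$, exploiting the infinitude of $H$ to provide infinitely many fresh coordinates and decoupling the isolation constraints across them. This is exactly where non-local-finiteness is essential: in the locally finite setting of Theorem~\ref{intro_mainthm} each finitely generated sub-action is \emph{finite}, so only finitely many coordinates are ever relevant, and the Fra\"iss\'e limit of finite configurations absorbs all such constraints rather than leaving room for diagonalization.
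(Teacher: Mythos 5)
Your high-level strategy (reduce to a finitely generated infinite subgroup $H$, then defeat separability using a large family of left-invariant metrics on $H$) is in the spirit of the paper, but the argument as written has two genuine gaps. First, the claim that ``universality forces $\mathcal{P}(H)\subseteq\Phi(X)$'' for a single displacement map $\Phi$ is not correct: universality only guarantees that each action $(H,d_\ell)$ embeds equivariantly after identifying $H$ with \emph{some} subgroup $H'\leq U$ via \emph{some} isomorphism, so you get countably many displacement maps (one per embedding of $H$ into $U$), not one. This is repairable by interleaving the diagonalization over all such embeddings (the paper instead handles it with a pigeonhole over the countably many subgroups isomorphic to $H$), but it must be addressed. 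Second, and more seriously, the ``isolation step'' that you yourself identify as the main obstacle is exactly the missing proof: you need to choose $\ell^\ast(h_{k_n})$ at a definite distance $\varepsilon_n$ from $n+1$ prescribed reals \emph{while} keeping $\ell^\ast$ extendable to a genuine proper left-invariant length function consistent with all earlier assignments, and for the final contradiction the margins $\varepsilon_n$ must not shrink faster than the accuracy with which the unknown point $x$ is approximated by $D$ --- a quantifier problem you do not resolve. Invoking the amalgamation theorem does not supply this freedom: that theorem glues two given actions over a common subaction; it does not let you prescribe one new distance inside a chosen window and certify consistency. As written, the core of the proof is an acknowledged open step, so the argument is incomplete.

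For comparison, the paper's proof avoids both issues. For the finitely generated infinite subgroup $H\leq G$ it builds continuum many left-invariant metrics $(d_x)_{x\in 2^{\Nat}}$ directly, by declaring the $n$-th generator to have length $1$ or $2$ according to $x(n)$ and taking the induced word-type length function; one checks by hand that this extends the prescribed generator lengths, so no extension problem arises. Any two such metrics with $x(n)\neq y(n)$ differ by exactly $1$ at the $n$-th generator, and a triangle-inequality computation shows that the images of $1_H$ under the corresponding equivariant embeddings must be $1/2$-separated. After a pigeonhole over the countably many subgroups of the acting group isomorphic to $H$, one obtains an uncountable $1/2$-separated subset of $X$, contradicting separability. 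This is a pure cardinality argument with no diagonalization against a dense set, and it is where your proposal would need to be redirected (or substantially completed) to become a proof.
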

Moreover, we show that the amalgamation of the actions does not work in the abelian category.
\begin{thm}
The class of actions of finite abelian groups on finite metric spaces does not have the amalgamation property.
\end{thm}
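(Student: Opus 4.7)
The plan is to exhibit an explicit triple of objects in the class whose pushout cannot be realized within the class. Take $G_0 = \{e\}$ and $G_1 = G_2 = \mathbb{Z}/2\mathbb{Z}$ with generators $\sigma_1, \sigma_2$ respectively. Let $X_0 = \{a, b\}$ with $d(a,b) = 1$ and trivial $G_0$-action; let $X_1 = X_0$ as a metric space with $\sigma_1$ swapping $a$ and $b$; and let $X_2 = \{a, b, c\}$ be equipped with the equilateral metric of side $1$, with $\sigma_2$ swapping $a$ and $c$ while fixing $b$ (an isometry, since the only nontrivial constraint, $d(a,b) = d(b,c)$, holds). Both extensions are legitimate morphisms from $(G_0, X_0)$ in the category of finite abelian groups acting on finite metric spaces.

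Suppose for contradiction that $(G, X, \alpha)$ is an amalgam within the class. Then $G$ is a finite abelian group containing the images of $\sigma_1$ and $\sigma_2$, and $X$ is a finite metric space containing isometric copies of $X_1$ and $X_2$ identified on $X_0$, on which $G$ acts by isometries extending the given actions. Evaluate $\sigma_2 \sigma_1 \cdot a$: first, $\sigma_1 \cdot a = b$ in $X_1$, and then $\sigma_2 \cdot b = b$ since $b \in X_0 \subset X_2$ is fixed by $\sigma_2$; the result is $b$. Evaluate $\sigma_1 \sigma_2 \cdot a$: first $\sigma_2 \cdot a = c$, then the result is $\sigma_1 \cdot c$. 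Since $G$ is abelian, $\sigma_1 \cdot c = b$; applying $\sigma_1$ once more and using $\sigma_1^2 = e$ yields $c = \sigma_1 \cdot b = a$, contradicting $a \neq c$ in $X_2$.

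The main obstacle in setting up this counterexample is ensuring that the $\sigma_2$-fixed point in $X_2$ really must coincide with the $\sigma_1$-image of $a$ in any amalgam, rather than being free to stay distinct; this is achieved by placing $b$ inside $X_0$, which forces the two copies of $b$ in $X_1$ and $X_2$ to be identified. If instead one takes $X_0 = \{a\}$, the two occurrences of $b$ can be kept separate and a four-point amalgam does exist (on which $\mathbb{Z}/2\mathbb{Z} \oplus \mathbb{Z}/2\mathbb{Z}$ acts transitively by isometries). The metric plays essentially no role in the contradiction, which is purely combinatorial: it comes from the clash between the commutativity imposed by the abelian category and the involution relation $\sigma_1^2 = e$. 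Dropping the abelianness restriction (so that the free product $\mathbb{Z}/2\mathbb{Z} \ast \mathbb{Z}/2\mathbb{Z}$ is allowed as the amalgamated group) removes the obstruction, which is precisely why the positive amalgamation theorem quoted earlier goes through in the general setting but collapses here.
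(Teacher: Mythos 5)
Your computation is internally correct, and it does refute amalgamation for the class of \emph{all} actions of finite abelian groups on finite metric spaces: the clash between $\sigma_2\sigma_1\cdot a=b$ and $\sigma_1\sigma_2\cdot a=\sigma_1\cdot c$ forces $c=a$. The gap is that this is not the class the paper's proposition is about. The class used throughout Section 2 — and the one whose amalgamation property is at stake for the Fra\"iss\'e construction — is the class of \emph{pointed free} actions, and the statement proved in the body of the paper is explicitly about pointed free actions of finite abelian groups. Your action of $\sigma_2$ on $X_2$ has a fixed point $b$, so your triple does not lie in that class, and the failure of amalgamation for free actions does not follow from your example.

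This is not a cosmetic restriction: for free actions your purely combinatorial obstruction cannot occur at all. Given free actions of $G_1$ and $G_2$ over a common free $G_0$-subaction, the underlying $G$-set amalgam always exists (take $\bigcup_i G_3\cdot x_i$ for any abelian group $G_3$ amalgamating $G_1$ and $G_2$ over $G_0$, e.g.\ a suitable quotient of $G_1\ast_{G_0}G_2$), so any obstruction must come from the metric. That is why the paper's counterexample is metric in nature: it takes $G_0$ trivial acting on a three-point space with $d(x_1,x_2)=d(x_2,x_3)=10$, $d(x_1,x_3)=16$, and two free $\Int/2\Int$-actions on six-point extensions with $d(x_1,g\cdot x_2)=d(g\cdot x_2,x_3)=8$ in $X_1$ and $d(x_2,h\cdot x_2)=1$, $d(x_1,h\cdot x_3)=18$ in $X_2$; commutativity is used only to rewrite $d(g\cdot x_2,(h+g)\cdot x_2)=d(x_2,h\cdot x_2)=1$ and $d((h+g)\cdot x_2,h\cdot x_3)=d(g\cdot x_2,x_3)=8$, so that the triangle inequality in any amalgam would give $18\leq 8+1+8=17$. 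Your closing remark that ``the metric plays essentially no role'' is precisely the symptom that you have proved a weaker statement than the one the paper needs: for the free class, the metric must carry the obstruction.
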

Finally, we discuss universal actions on Banach spaces. General actions by isometries are by affine isometries. Unfortunately, we show that no universal action by affine isometries can exist, even of finite groups. Thus we are forced to restrict to actions by linear isometries, i.e. representations in Banach spaces. We propose a class of actions of finite groups on finite-dimensional Banach spaces such that, provided this class has the amalgamation property, its Fra\" iss\' e limit would be a universal action of the Hall's group on the Gurarij space by linear isometries.
\section{Preliminaries}
Let us start with our notational convention. All the group actions in this paper are by isometries. We usually denote actions by the symbol `$\alpha:G\curvearrowright X$', where $G$ is a group and $X$ is a metric space. However, as it is common, we usualy write $g\cdot x$ instead of $\alpha(g,x)$.

Regarding groups, we are mostly concerned with locally finite ones, where a group is locally finite if every finitely generated subgroup is finite. Since we shall work solely with countable groups, it is the same as saying that the group is a direct limit of a sequence of finite groups.

Our constructions of universal objects are based on techniques commonly referred as ``Fra\" iss\' e theory". We refer to Chapter 7 in \cite{Ho} for more information about this subject. For a reader unfamiliar with this method we briefly and informally describe the basics of Fra\" iss\' e theory that we use in the paper.

Let $\Age$ be some countable class of mathematical objects of some type with some notion of embedding between these objects. Suppose that direct limits of objects from $\Age$ exist. Think of the class of finite groups for instance. We say it is a \emph{Fra\" iss\' e class} if any two objects from $\Age$ can be embedded into a single object from $\Age$, such a property is called \emph{joint embedding property}, and if whenever we have objects $A,B,C\in \Age$ such that $A$ embeds into both $B$ and $C$, witnessed by embeddings $\iota_B$, resp. $\iota_C$, then there exists an object $D\in\Age$ and embeddings $\rho_B$, resp. $\rho_C$ of $B$ into $D$, resp. $C$ into $D$ such that $\rho_C\circ\iota_C=\rho_B\circ\iota_B$; i.e we can do amalgamation with object from $\Age$. The latter property is called \emph{amalgamation property}. The Fra\" iss\' e theorem (see Chapter 7 in \cite{Ho}) then asserts that there exists a unique object $K$, called the \emph{Fra\" iss\' e limit of $\Age$}, which is a direct limit of a sequence of objects from $\Age$ satisfying
\begin{itemize}
\item every object $A\in\Age$ embeds into $K$;
\item whenever we have objects $A,B\in \Age$ such that $A$ embeds via $\rho_A$ into $K$ and via $\iota_A$ into $B$, then there exists an embedding $\rho_B$ of $B$ into $K$ such that $\rho_A=\rho_B\circ\iota_A$.

\end{itemize}
The second property is called the \emph{extension property} and will be used in our proofs of universality of certain actions. Note that whenever $X$ is some direct limit of a sequence of objects from $\Age$, then successive application of the extension property gives an embedding of $X$ into $K$.

We note that the Fra\" iss\' e theorem stated above is the only tool which we shall use and its proof is actually shorter that the discussion on Fra\" iss\' e theory above and may be left as an exercise. Since we are going to work with Fra\" iss\' e classes which are `metric' we note that recently a general theory for metric Fra\" iss\' e classes was developed independently in \cite{BY} and \cite{Ku}. However, we shall not directly use their results in our paper.\\

\noindent {\bf Example 1} Consider the countable class of all finite graphs. It is easy to show it has the joint and amalgamation properties, thus by the Fra\" iss\' e theorem there exists a Fra\" iss\' e limit, a certain direct limit of a sequence of finite graphs, which is a countable graph commonly known as the random graph, or the Rado graph. The extension property allows to show that it contains as a subgraph a copy of every countable graph.\\

\noindent {\bf Example 2} Consider now the countable class of all finite abelian groups. It is again easy to show the joint and amalgamation properties and one can even show that the Fra\" iss\' e limit is nothing else than $\bigoplus_{n\in\Nat} \Rat/\Int$.\\

\noindent {\bf Example 3} Consider now the countable class of all finite groups, not necessarily abelian. This is the most important example for us regarding the topic of our paper. It is less straightforward, nevertheless possible to show (see \cite{Ne}), that this class has the amalgamation property, and thus also the joint embedding property. The Fra\" iss\' e limit is what is commonly known as the Hall's universal locally finite group (\cite{Ha}).\\

\noindent {\bf Example 4} Consider the countable class of all finitely presented groups. It is again easy to show the amalgamation property. We are not aware anyone has considered the Fra\" iss\' e limit of this class yet.\\

\noindent {\bf Example 5} Consider the countable class of all finite metric spaces with rational distances. The amalgamation and joint embedding is again straightforward. The Fra\" iss\' e limit is what is known as the rational Urysohn space. Its completion is the Urysohn universal space (see \cite{Ur}).\\

\noindent {\bf Example 6} As the last example, we present another `metric Fra\" iss\' e class' recently discovered by the author in \cite{Do}. It is the class of all finitely generated free abelian groups with a `finitely presented rational metric'. The completion of its limit gives the metrically universal abelian separable group. See the paper for details.
\section{Universal actions}
Let us start with the discussion on the notion of universality, which can be naturally done in the category-theoretical language. Given a category, consisting of objects and morphism (or embeddings) between them, an object is universal if for every object from the category there is a morphism (embedding) into the universal one. Our objects are groups acting on metric spaces by isometries. If the acting group, say $G$, is fixed, the natural notion of embedding is that of `$G$-equivariant isometric embedding'. If the acting group is allowed to vary, then the embedding should consist of both group monomorphism and equivariant isometric embedding. We propose two notions of universality based on these two choices of embeddings.
\begin{defin}\label{def_univaction}
Let $\Class$ be a class of actions of countable groups on separable metric spaces by isometries. Say that  $\alpha: G\curvearrowright X\in\Class$ is a universal action from $\Class$ if for any action $\beta:H\curvearrowright Y\in\Class$ there is a subgroup $H'\leq G$ isomorphic to $H$ and an isometric embedding of $Y$ into $X$ which is, after identifying $H$ and $H'$, $H$-equivariant.
\end{defin}
The previous notion of universality corresponds to the universality from Theorem \ref{intro_mainthm} if one takes as $\Class$ the class of all actions of all countable locally finite groups on all separable metric spaces.

We however state another notion of universality which is natural. As it will turn out, it is too strong.
\begin{defin}\label{def_univaction2}
Let $G$ be a fixed countable group. Let $\Class$ be a class of actions of $G$ on separable metric spaces. Say that $\alpha: G\curvearrowright X\in\Class$ is a universal action from $\Class$ if for any action $\beta:G\curvearrowright Y\in\Class$ there is an $G$-equivariant isometric embdedding of $Y$ into $X$.
\end{defin}
\subsection{Proof of the main theorem}
In this subsection, we define a natural Fra\" iss\' e class of actions of finite groups on finite metric spaces. Using that, we will prove Theorem \ref{intro_mainthm}.
\begin{defin}
Let $G$ be a group and $X$ a metric space. A \emph{pointed free action} of $G$ on $X$ by isometries is a tuple $(G\curvearrowright X, (x_i)_{i\in I})$, where $G\curvearrowright X$ is a free action of $G$ on $X$ by isometries and $I$ is some index set for the orbits of the action and $(x_i)_{i\in I}$ is a selector on the orbits, i.e. $X=\bigcup_{i\in I} G\cdot x_i$ and for $i\neq j$, $x_i$ and $x_j$ lie in different orbits.
\end{defin}
There is also a natural notion of an embedding between two pointed free actions. Suppose we are given two such actions $(H\curvearrowright Y,(y_i)_{i\in I})$ and $(G\curvearrowright X, (x_j)_{j\in J})$.  An \emph{embedding} of $(H\curvearrowright Y,(y_i)_{i\in I})$ into $(G\curvearrowright X, (x_j)_{j\in J})$ is a pair $(\phi,\psi)$, where $\phi:H\hookrightarrow G$ is a group embedding and $\psi:Y\hookrightarrow X$ is an isometric embedding that sends the distinguished points $(y_i)_{i\in I}$ into the set of distinguished points $(x_j)_{j\in J}$ and such that for any $i,j\in I$ and $f,h\in H$ we have $$d_Y(f\cdot y_i,h\cdot y_j)=d_X(\phi(f)\cdot \psi(y_i),\phi(h)\cdot \psi(y_j)).$$

Notice that in the case when the group monomorphism $\phi$ is just an inclusion, the previous definition says that $\psi$ is an $H$-equivariant isometric embedding that sends the set of distinguished points into the set of distinguished points.
\begin{thm}\label{thm_amal}
The pointed free actions can be amalgamated.
\end{thm}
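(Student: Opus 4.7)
The plan is to build the amalgam by combining the amalgamated free product of the groups with a path metric induced by the given data. Given pointed free actions $(G_0 \curvearrowright Y_0, (y_i)_{i\in I_0})$ embedded into $(G_k \curvearrowright Y_k, (y_i)_{i \in I_k})$ for $k = 1, 2$, with agreement on $G_0$ and on the distinguished points indexed by $I_0$, set $G := G_1 \ast_{G_0} G_2$ and $I := I_1 \cup_{I_0} I_2$. Take the underlying set of the amalgam to be $X := G \times I$, carrying the free left action $g \cdot (g', i) := (gg', i)$ with distinguished points $x_i := (e, i)$. Each $Y_k$ embeds into $X$ by sending $h \cdot y_i$ (for $h \in G_k$, $i \in I_k$) to $(h, i)$; these embeddings agree on $Y_0$ and restrict to the original $G_k$-actions on $Y_k$.

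To metrize $X$, call a pair $(a, b) \in X^2$ an \emph{edge} if $a, b \in g \cdot Y_k$ for some $g \in G$ and $k \in \{1, 2\}$, and assign the prescribed length $\ell(a, b) := d_{Y_k}(g^{-1}a, g^{-1}b)$. Freeness of the actions together with the compatibility of the embeddings on $Y_0$ makes $\ell$ independent of the witnessing pair $(g, k)$: overlaps of translates $g_1 Y_1 \cap g_2 Y_2$ are forced to sit inside $G$-translates of $Y_0$, where both candidate values for $\ell$ collapse to the common $Y_0$-distance. Define
\[ d(a, b) := \inf \sum_{j = 0}^{n-1} \ell(a_j, a_{j+1}), \]
the infimum taken over finite sequences $a = a_0, \ldots, a_n = b$ with each consecutive pair an edge. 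This $d$ is automatically symmetric, satisfies the triangle inequality, and is $G$-invariant because the $G$-action permutes edges preserving $\ell$.

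The crux is the \emph{no-shortening property}: $d(a, b) = d_{Y_k}(a, b)$ whenever $a, b \in Y_k$; this will also force $d$ to be a genuine metric. The strategy is a path-reduction argument. Given any path from $a$ to $b$, whenever two consecutive edges are both witnessed by a common translate $g \cdot Y_k$, fuse them into a single such edge by the triangle inequality inside $Y_k$; this never increases total length. An interior edge whose transition element $g_{j-1}^{-1} g_j$ lies in $G_0$ must, by the alternating structure, have both indices in $I_0$, and can therefore be reinterpreted as an edge of the other type and absorbed into a neighbor. After exhaustive reduction the path is strictly alternating and each transition $h_j := g_{j-1}^{-1} g_j$ lies in $G_{k_j} \setminus G_0$, so $h_1 h_2 \cdots h_n$ is a normal form representation of $g_0^{-1} g_n$ in $G_1 \ast_{G_0} G_2$. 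When $a, b \in Y_1$, so that $g_0^{-1} g_n \in G_1$, uniqueness of normal forms forces $n \leq 1$, and the reduced path is either trivial or a single $G_1$-edge of length $d_{Y_1}(a, b)$. The main obstacle is exactly this reduction step: one must carefully track adjacency witnesses through the fusions and invoke the normal form theorem for amalgamated free products. Once it is in place, $d$ restricts to $d_{Y_k}$ on each $Y_k$, the $G$-action is free by construction, and $(G \curvearrowright X, (x_i)_{i \in I})$ is the desired amalgam of pointed free actions.
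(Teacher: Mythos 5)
Your construction is exactly the paper's: form $G_1\ast_{G_0}G_2$, take the union of orbits of the distinguished points, put a weighted graph structure on it whose edges are pairs lying in a common translate of $X_1$ or $X_2$, and take the path metric. Where you genuinely diverge is in verifying that the inclusions $X_k\hookrightarrow X_3$ are isometric. The paper argues by induction on the length of a hypothetical shorter path, locating the first edge where the path leaves $X_1$ and the first where it returns, contracting intermediate subpaths by the inductive hypothesis, and handling the residual case ($l=1$, $l'=n$) by reducing to a path inside $X_2$ between two points of $X_0$. You instead run a confluence argument: fuse adjacent edges of the same type, reinterpret $G_0$-transition edges, and then invoke the normal form theorem for amalgamated free products to force the reduced path to be a single edge. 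Both work; your version makes the role of the normal form theorem explicit (the paper uses it only implicitly in the step ``$h_1\cdots h_n\in G_0$''), at the cost of having to track witnesses through the reduction, which you correctly flag as the delicate part.

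One concrete imprecision in that delicate part: your claim that after exhaustive reduction \emph{every} transition $h_j$ lies in $G_{k_j}\setminus G_0$ is not quite right for the two boundary edges. An interior type-$1$ edge with transition in $G_0$ does have both indices in $I_0$ (its two neighbours are type $2$), so it can be reinterpreted and absorbed; but the \emph{first} edge, say of type $1$ with $h_1\in G_0$ and outer index $i_0\in I_1\setminus I_0$, cannot be reinterpreted as a type-$2$ edge and survives the reduction. The conclusion still follows — absorb such a boundary $h_1\in G_0$ into the adjacent syllable $h_2\in G_2\setminus G_0$ at the level of the group element $g_0^{-1}g_n=h_1\cdots h_n$ before applying the normal form theorem — but as stated your alternating word need not literally be a normal form. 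With that repair (and a remark that in the infinite case $d$ is a priori only a pseudometric on all of $X_3$, which is harmless since the restrictions to $X_1$ and $X_2$ are genuine metrics), your argument closes.
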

\begin{remark}
It means that for any embeddings $\psi_i:(G_0\curvearrowright X_0,(x_j)_{j\in I_0})\hookrightarrow (G_i\curvearrowright X_i,(x_j)_{j\in I_i})$, for $j\in\{1,2\}$, where we assume that $G_0\leq G_1$ and $G_0\leq G_2$, there are a group $G_1,G_2\leq G_3$, pointed action $(G_3\curvearrowright X_3, (x_j)_{j\in I_3})$  and embeddings $\rho_j:(G_i\curvearrowright X_i,(x_j)_{j\in I_i})\hookrightarrow (G_3\curvearrowright X_3, (x_j)_{j\in I_3})$, for $j\in \{1,2\}$, such that $\rho_2\circ \psi_2=\rho_1\circ \psi_1$.
\end{remark}
\begin{proof}
Consider such actions from the remark above, i.e. $(G_i\curvearrowright X_i,(x_j)_{j\in I_i})$, for $i\in\{0,1,2\}$. We may also suppose that $I_0\subseteq I_i$, for $i=1,2$, and that $I_0=I_1\cap I_2$. Let $G_3$ be $G_1\ast_{G_0} G_2$, i.e. the free product of $G_1$ and $G_2$ amalgamated over $G_0$ (we refer to \cite{LySch} for constructions of amalgamated free products of groups). Let $I_3=I_1\cup I_2$ and set $X_3=\bigcup_{j\in I_3} G_3\cdot j$. Clearly, $X_i\subseteq X_3$, for $i=1,2$. We shall define a metric on $X_3$ so that the canonical action of $G_3$ on $X_3$ is by isometries and that the inclusion of $X_i$ into $X_3$ is isometric (it is obviously $G_i$-equivariant), for $i=1,2$.

We define a structure of a weighted graph on $X_3$ that will help us define a metric there. That is, we define edges on $X_3$ and then associate a certain weight function $w$ giving positive real numbers to these edges. For $g,h\in G_3$ and $i,j\in I_3$, the elements $g\cdot x_i$ and $h\cdot x_j$ are connected by an edge if and only if
\begin{itemize}
\item either $g^{-1}h\in G_1$ and $i,j\in I_1$, then its weight is $$w(g\cdot x_i,h\cdot x_j)=d_{X_1}(x_i,g^{-1}h\cdot x_j);$$
\item or $g^{-1}h\in G_2$ and $i,j\in I_2$, then analogously its weight is  $$w(g\cdot x_i,h\cdot x_j)=d_{X_2}(x_i,g^{-1}h\cdot x_j).$$
\end{itemize}
In case that $g^{-1}h\in G_0$ and $i,j\in I_0$ there is no ambiguity in the definition. Indeed, by assumption, in such a case we have $$d_{X_0}(x_i,g^{-1}h\cdot x_j)=d_{X_1}(x_i,g^{-1}h\cdot x_j)=d_{X_2}(x_i,g^{-1}h\cdot x_j).$$
It is clear that this graph is connected, so we define the graph metric $d$ on $X_3$ as follows: for $x,y\in X_3$ we set $$d(x,y)=\inf\{\sum_{i=1}^n w(e_i):e_1\ldots e_n\text{ is a path from }x\text{ to }y\}.$$ In case the groups and the index sets are finite we may replace the infimum above by minimum.
It follows immediately from the definition that the natural action of $G_3$ on $X_3$ is a weighted graph automorphism, i.e. it preserves the edges including their weight. It follows that $G_3$ acts by isometries on $X_3$. We shall check that the canonical embeddings (inclusions) of $X_1$ and $X_2$ into $X_3$ are isometric.

We shall check it for both $X_1$ and $X_2$. Thus fix some $g,h\in G$ and $i,j\in I_3$ such that either both $g,h\in G_1$ and both $i,j\in I_1$, or both $g,h\in G_2$ and both $i,j\in I_2$. We need to check that $d_{X_l}(g\cdot x_ih\cdot x_j)=d(g\cdot x_i,h\cdot x_j)$, where $l\in \{1,2\}$ depending on whether $g,h\in G_1$, $i,j\in I_1$, or $g,h\in G_2$, $i,j\in I_2$. It is clear that $d_{X_l}(g\cdot x_i,h\cdot x_j)\geq d(g\cdot x_i,h\cdot x_j)$, so suppose there is a strict inequality and we shall reach a contradiction. There is then an edge-path $e_1\ldots e_n$ from $x=g\cdot x_i$ to $y=h\cdot x_j$. By induction on $n$, the length of the path, we shall show that  $d_{X_l}(g\cdot x_i,h\cdot x_j)\leq w(e_1)+\ldots+w(e_n)$. The case $n=1$ is clear, so we suppose that $n\geq 2$ and we have proved it for all paths of length strictly less than $n$ between all pairs of elements from $X_1$ and all pairs of elements from $X_2$.

Now without loss of generality we suppose that $g,h\in G_1$, $i,j\in I_1$, the other case is analogous. For $1\leq l\leq n$, let $z_l=g_l\cdot x_{i_l}$ be the start vertex of $e_l$ and $z_{l+1}=g_{l+1}\cdot x_{i{_l+1}}$ the end vertex. Set $h_l=g_l^{-1}g_{l+1}$, for $1\leq l\leq n$. It follows that $gh_1 h_2\ldots h_n=h$ and each $h_l$ belongs to either $G_1$ or $G_2$. If all the $h_l$'s belong to $G_1$ then also all the $i_l$'s belong to $I_1$ and the path goes within $X_1$ and we can use the triangle inequalities there. So we suppose that some $h_l$, $1\leq l\leq n$, is from $G_2$; equivalently, that the path leaves $X_1$ at some point. Let $1\leq l<n$ be the least index where the path leaves $X_1$, i.e. $z_l\in X_1$, while $z_{l+1}\notin X_1$. It follows that $i_l\in I_0$. Indeed, by assumption for all $k\leq l$ we have $i_k\in I_1$, however since $h_l\in G_2$, by the definition of the edges of the graph we must have also that $i_l\in I_2$; thus $i_l\in I_1\cap I_2=I_0$. Now let $l<l'\leq n$ be the least index such that the path returns back to $X_1$, i.e. the least index $l<l'$ such that $z_{l'}\in X_1$. Again necessarily $i_{l'}\in I_0$. If $1<l$ or $l'<n$, then the subpath $e_l\ldots e_{l'-1}$ between two elements of $X_1$ is strictly shorter than $n$ and thus by the inductive hypothesis we have $d_{X_1}(z_l,z_{l'})\leq w(e_l)+\ldots+w(e_{l'-1})$. So we may replace this subpath by a single edge going from $z_l$ to $z_{l'}$, hereby again shortening the path, so by the inductive hypothesis we get $d_{X_1}(g\cdot x_i,h\cdot x_j)\leq w(e_1)+\ldots+w(e_n)$.\\

Thus we are left with the case $l=1$ and $l'=n$. In such a case we have $h_1\in G_2$ and $h_n\in G_2$, $i_1,i_2,i_{n-1},i_n\in I_0$, and also, since $n$ is the least number $l$ such that $h_1\ldots h_l\in G_1$, we must actually have $h_1\ldots h_n\in G_0$. It follows that $g$ and $h$ lie in the same left-coset of $G_0$ in $G_1$, i.e. $g^{-1}h\in G_0$. It follows that $d(g\cdot x_i,h\cdot x_j)=d(x_i,g^{-1}h\cdot x_j)$. Thus it suffices to show that $$d(x_i,g^{-1}h\cdot x_j)=d_{X_0}(x_i,g^{-1}h\cdot x_j)=d_{X_1}(x_i,g^{-1}h\cdot x_j),$$ where the latter equality is known and we need to show the former. In other words, we shall thus now, without loss of generality, assume that $g=1$, so $h=h_1\ldots h_n\in G_0$ and $x_i, h\cdot x_j\in X_0$.

We have two cases:
\begin{enumerate}
\item If $n=2$, i.e. $h=h_1 h_2$, then the path $e_1 e_2$ is within $X_2$ between two elements from $X_0$. Therefore, by the triangle inequality in $X_2$, its length is greater or equal to the path consisting of a single edge from $x_i$ to $h\cdot x_j$, that means we have $$w(e_1)+w(e_2)=d_{X_2}(x_i,h_1\cdot x_{i_2})+d_{X_2}(h_1\cdot x_{i_2},h\cdot x_j)\geq$$ $$d_{X_2}(x_i,h\cdot x_j)=d_{X_0}(x_i,h\cdot x_j)=d_{X_1}(x_i,h\cdot x_j),$$ and we are done.
\item If $n>2$, then the non-trivial subpath $e_2\ldots e_{n-1}$ is a path of length strictly less than $n$ between two elements from $X_2$ (note that $z_2=h_1\cdot x_{i_2}\in X_2$ and also $z_n=h h^{-1}_n\cdot x_{i_n}\in X_2$), thus by the inductive hypothesis we get that $$w(e_2)+\ldots+w(e_{n-1})\geq d_{X_2}(z_2,z_n).$$ It follows that $$\sum_{l=1}^n w(e_l)\geq d_{X_2}(x_i,z_2)+d_{X_2}(z_2,z_n)+d_{X_2}(z_n,h\cdot x_j)\geq$$ $$d_{X_2}(x_i,h\cdot x_j)=d_{X_0}(x_i,h\cdot x_j)=d_{X_1}(x_i,h\cdot x_j),$$ and we are again done.\\
\end{enumerate}
\end{proof}
\begin{remark}
The previous theorem was stated and proved for free actions. However, the proof can be modified to work for non-free actions as follows: Replace the metric by a pseudometric so that the action becomes free. Then proceed completely analogously working with pseudometrics instead of metrics and at the end make a metric quotient.
\end{remark}
\begin{remark}
We were informed by the referee that the proof of the previous theorem is related to the constructions from \cite{DaGu} where the authors show (among other things) that countable discrete groups that are uniformly embeddable into a Hilbert space are closed under taking amalgamated free products.
\end{remark}
In the next theorem we shall restrict our attention to actions of finite groups on finite metric spaces. In the proof, it will turn out that the theorem is a corollary of Theorem \ref{thm_amal}. That means that even if we are interested only in the theorem that follows it is natural to first prove Theorem \ref{thm_amal} and use the existence of the general amalgam to show the existence of the finite amalgam.
\begin{thm}
The class of pointed free actions of finite groups on finite metric spaces has the amalgamation property
\end{thm}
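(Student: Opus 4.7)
The plan is to deduce this as a corollary of Theorem \ref{thm_amal} by passing to a suitable finite quotient. First, I apply Theorem \ref{thm_amal} to the given data to obtain the (generally infinite) amalgam $(G'_3 \curvearrowright X'_3, (x_j)_{j\in I_3})$, where $G'_3 = G_1 \ast_{G_0} G_2$ and both $X_1,X_2$ sit isometrically and equivariantly inside $X'_3$.

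Second, I invoke the residual finiteness of $G'_3$---which holds because it is an amalgam of two finite groups over a finite subgroup, hence virtually free---to produce a finite-index normal subgroup $N \trianglelefteq G'_3$ avoiding a specific finite ``bad set'' $F \subseteq G'_3 \setminus \{1\}$. I take $F$ to be the union of: (i) every non-identity element of $G_1 \cup G_2$, so that $G_1,G_2$ inject into $G_3 := G'_3/N$; and (ii) every $g \in G'_3$ witnessing $d_{X'_3}(a,g\cdot b) < d_{X_l}(a,b)$ for some $l \in \{1,2\}$ and some $a,b \in X_l$, so that the distances coming from $X_1,X_2$ are not shrunk upon quotienting. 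Both sets are finite: (i) by hypothesis, and (ii) because the weighted graph on $X'_3$ defining $d_{X'_3}$ has finite local degree (at most $|G_1|\,|I_1| + |G_2|\,|I_2|$ edges leave each vertex) and a uniformly positive minimum edge weight, so balls of bounded radius are finite; freeness of the $G'_3$-action then forces only finitely many $g$ to place $g\cdot b$ within a bounded radius of $a$.

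Finally, I set $G_3 := G'_3/N$ and $X_3 := X'_3/N$ with the quotient pseudometric $d_{X_3}([a],[b]) := \min_{n\in N} d_{X'_3}(a,n\cdot b)$. Both are finite; positivity of the minimum edge weight separates $N$-orbits, so $d_{X_3}$ is a genuine metric; and $G_3$ acts on $X_3$ freely by isometries, with distinguished points $([x_j])_{j\in I_3}$. By condition (ii) together with the identity $d_{X'_3}|_{X_l} = d_{X_l}$ from Theorem \ref{thm_amal}, the minimum defining $d_{X_3}([a],[b])$ for $a,b \in X_l$ is attained at $n=1$ and equals $d_{X_l}(a,b)$; combined with the fact that $N \cap G_l = \{1\}$ from condition (i) (which, by freeness, forces the composition $X_l \hookrightarrow X'_3 \to X_3$ to be injective), this provides the required isometric $G_l$-equivariant embedding $X_l \hookrightarrow X_3$ preserving distinguished points.

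The main obstacle is Step 2, the construction of $F$: one must verify that set (ii) is genuinely finite so that residual finiteness can be applied. This rests on the local finiteness of the weighted graph defining $d_{X'_3}$, which itself follows from the finiteness of $G_1,G_2,I_3$ and the positive lower bound on edge weights. Once this is done, the residual finiteness of amalgamated products of finite groups over a finite subgroup---classical via Bass--Serre theory---closes the argument.
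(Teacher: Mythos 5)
Your argument is correct, and it shares the paper's overall strategy: build the infinite amalgam over $G_1\ast_{G_0}G_2$ via Theorem \ref{thm_amal}, then use residual finiteness of an amalgam of finite groups over a finite subgroup to descend to a finite quotient. Where you genuinely differ is in the mechanics of the descent. The paper does not take a metric quotient: it fixes $K=\lceil M/m\rceil$ (an a priori bound on the number of edges in any potentially distance-shrinking path, with $M$ the maximal and $m$ the minimal positive edge weight), chooses a finite quotient $G_3$ of $G_1\ast_{G_0}G_2$ whose quotient map is injective on the word-metric ball of radius $K+1$, defines the metric on $\bigcup_{j}G_3\cdot x_j$ afresh as a weighted-graph metric, and shows that any too-short path has at most $K$ edges and hence lifts to the infinite amalgam, where Theorem \ref{thm_amal} rules it out. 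You instead take the quotient pseudometric $\min_{n\in N}d_{X'_3}(a,n\cdot b)$ on $X'_3/N$ and choose the finite-index normal subgroup $N$ to avoid the finitely many group elements that actually shrink some distance between points of $X_1\cup X_2$; the finiteness of that bad set rests on exactly the same local finiteness and positive minimum edge weight that the paper uses to define $K$. Both verifications thus reduce to the same two facts --- balls of radius $\mathrm{diam}(X_l)$ in the infinite amalgam are finite, and the infinite amalgam preserves $d_{X_1}$ and $d_{X_2}$ --- but your packaging inherits the isometry statement directly from Theorem \ref{thm_amal} rather than re-running the path-lifting argument, at the cost of the (correctly handled) extra checks that the quotient pseudometric is a genuine metric and that the induced $G_3$-action is still free and well-defined; both follow from uniform discreteness of $X'_3$ and freeness of the $G'_3$-action, as you say. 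It is worth noting that the quotient-pseudometric device you employ is essentially the one the paper itself suggests in a remark for extending Theorem \ref{thm_amal} to non-free actions, so your route fits naturally into the paper's framework.
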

\begin{proof}
Let us start as in the previous theorem with three pointed actions  $(G_i\curvearrowright X_i,(x_j)_{j\in I_i})$, for $i\in\{0,1,2\}$ such that $G_0\leq G_i$, $I_0\subseteq I_i$, for $i=1,2$, $I_0=I_1\cap I_2$. Now the difference is that all the sets are finite. Let $G$ be now any amalgam group of $G_1$ and $G_2$ over $G_0$, e.g. the free product with amalgamation $G_1\ast_{G_0} G_2$. Set $I_3=I_1\cup I_2$ and $X_G=\bigcup_{j\in I_3} G\cdot x_j$. As in the proof of Theorem \ref{thm_amal} we define a weighted graph structure on $X_G$. That is, for $g,h\in G$ and $i,j\in I_3$, the elements $g\cdot x_i$ and $h\cdot x_j$ are connected by an edge if and only if
\begin{itemize}
\item either $g^{-1}h\in G_1$ and $i,j\in I_1$, then its weight is $$w(g\cdot x_i,h\cdot x_j)=d_{X_1}(x_i,g^{-1}h\cdot x_j);$$
\item or $g^{-1}h\in G_2$ and $i,j\in I_2$, then analogously its weight is  $$w(g\cdot x_i,h\cdot x_j)=d_{X_2}(x_i,g^{-1}h\cdot x_j).$$
\end{itemize}
There is no ambiguity when $g^{-1}h\in G_0$ and $i,j\in I_0$. We again define the graph metric as follows: for $x,y\in X_G$ we set $$d_G(x,y)=\min\{\sum_{i=1}^n w(e_i):e_1\ldots e_n\text{ is a path from }x\text{ to }y\}.$$ Notice that now $w$ assumes only finitely many values, so we may indeed use the minimum. Again, $G$ acts on $X_G$ by graph automorphisms preserving the weight function, thus also by isometries. In the proof of Theorem \ref{thm_amal} we showed that $d_G$ extends $d_{X_1}$ and $d_{X_2}$ in case $G=G_1\ast_{G_0} G_2$. We shall now find a finite amalgam $G$ with the same property. First set $G'=G_1\ast_{G_0} G_2$.

Set $M=\max\{w(e): e\text{ is an edge in }X_{G'}\}$ and $m=\min\{w(e):w(e)\neq 0\text{ and } e\text{ is an edge in }X_{G'}\}$. Set $K=\lceil \frac{M}{m}\rceil$. Consider the finite set $G_1\cup G_2$ as the set of generators of $G'$ and let $\lambda:G'\rightarrow [0,\infty)$ be the corresponding length function, i.e. the distance from the unit in $G'$ in the Cayley graph of $G'$ with $G_1\cup G_2$ as the generating set.

Now we use the fact that free amalgams of residually finite groups over finite groups are residually finite, see Theorems 2 and 3 in \cite{Ba}. We note that when the common subgroup is not finite, there are counterexamples (see \cite{Hig}). Thus let $G_3$ be a finite group such that there is an onto homomorphism $\phi:G'\rightarrow G_3$ which is injective on the ball $\{g\in G':\lambda(g)\leq K+1\}$. Clearly, $G_1$ and $G_2$ are subgroups of $G_3$ with the identified common subgroup $G_0$. Thus in particular, $G_3$ is a finite amalgamation of $G_1$ and $G_2$ over $G_0$. Moreover, we may suppose that $G_1\cup G_2$ generates $G_3$. Let $\rho$ be the length function on $G_3$ with respect to these generators. We have that $\phi$ is isometric with respect to $\lambda$ and $\rho$ on the ball $\{g\in G:\lambda(g)\leq K+1\}$

We now set $X_3$ to be the finite set $X_{G_3}=\bigcup_{j\in I_3} G_3\cdot x_j$. We again consider $X_0,X_1,X_2$ to be subsets of $X_3$. We have a metric $d_{X_3}=d_{G_3}$ defined using the weight function. What remains to check is that the canonical inclusions of $X_1$, resp. $X_2$ into $X_3$ are isometric. We shall do it for $X_1$, for $X_2$ it is analogous. So take some $g,h\in G_1$ and $i,j\in I_1$. We must check that $d_{X_3}(g\cdot x_i,h\cdot x_j)=d_{X_1}(g\cdot x_i,h\cdot x_j)$. Again, it is clear that $d_{X_3}(g\cdot x_i,h\cdot x_j)\leq d_{X_1}(g\cdot x_i,h\cdot x_j)$; suppose that there is a strict inequality. It follows that there is a path $e_1\ldots e_n$ from $g\cdot x_i$ to $h\cdot x_j$ such that $\sum_{l=1}^n w(e_l)<d_{X_1}(g\cdot x_i,h\cdot x_j)$. We claim that the length of the path $n$ is less or equal to $K$. Suppose that $n>K$. Then since for every $1\leq l\leq n$ we have $w(e_l)\geq m$, we get $$\sum_{l=1}^n w(e_l)\geq n\cdot m> K\cdot n\geq M.$$ However, by assumption $d_{X_1}(g\cdot x_i,h\cdot x_j)\leq M$, a contradiction.

Now, it follows that the path $e_1\ldots e_n$ lies within the finite set $\bigcup_{i\in I_3} \{g\in G_3:\rho(g)\leq K+1\}\cdot x_i$. Since $\phi$ is isometric with respect to $\lambda$ and $\rho$ on the ball $\{g\in G:\lambda(g)\leq K+1\}$ it follows that the path $e_1\ldots e_n$ from $G_3$ also exists in $X_{G'}$, and is, by definition, of the same length. However, we showed in the proof of Theorem \ref{thm_amal} that in $X_{G'}$ its weight was greater or equal to $d_{X_1}(g\cdot x_i,h\cdot x_j)$. This finishes the proof.
\end{proof}
\begin{remark}
We note that in the previous theorem it was essential that the actions were free. In that case, the residual finiteness of the free product of finite groups with amalgamation was sufficient. A strictly stronger notion than residual finiteness is the \emph{Ribes-Zalesski\v i property} (see \cite{RiZa}). Note that a one way how to formulate residual finiteness of $G$ is to say that the unit $1_G$ is closed in the profinite topology on $G$. In a similar spirit, one says that $G$ has the Ribes-Zalesski\v i property if for every tuple of finitely generated subgroups $H_1,\ldots,H_n\leq G$ their product $H_1\cdot\ldots\cdot H_n=\{h_1\cdot\ldots\cdot h_n: \forall i\leq n\; (h_i\in H_i)\}$ is closed in the profinite topology.

In \cite{Ro2}, Rosendal used the Ribes-Zalesski\v i property for finitary approximations of actions of groups on metric spaces by isometries. Similar ideas could be used to prove the amalgamation property for general, not necessarily free, actions if amalgamated free products had the Ribes-Zalesski\v i property. After proving the preceding theorem, it was pointed out to us by Julien Melleray that indeed amalgamated free products of two finite groups do have the Ribes-Zalesskii property. The proof follows the lines of Theorem 2 in \cite{Ba} and uses the fact that free groups have this property.
\end{remark}
Let $(G_n\curvearrowright X_n, (x_i)_{i\in I_n})_{n\in\Nat}$ be an enumeration of all pointed free actions of finite groups on finite metric spaces with rational distances. It follows from the previous theorem that it is a Fra\" iss\' e class. Indeed, it is clear from the proof that when working with rational spaces the amalgam will be rational as well. Moreover, the joint-embedding property is just a special case of the amalgamation property (note that any two actions have a common subaction, namely the action of a trivial group on a one-point space). So it has some Fra\" iss\' e limit $(\alpha_0: G\curvearrowright X, (x_i)_{i\in I})$, where $G$ is some countably infinite locally finite group, $X$ is a countably infinite rational metric space with countably infinite distinguished set of points $(x_i)_{i\in I}$ and $\alpha_0: G\curvearrowright X$ is a free action by isometries. 

It follows from the Fra\" iss\' e theorem that $(\alpha_0: G\curvearrowright X, (x_i)_{i\in I})$ has the following extension property:
\begin{fact}[The extension property]\label{extension_fact}
Let $F\leq G$ be a finite subgroup, $A\subseteq I$ a finite subset, and denote by $X_0$ the finite metric space $\bigcup_{i\in A} F\cdot x_i$. Consider the free pointed action $(F\curvearrowright X_0, (x_i)_{i\in A})$. Let $(H\curvearrowright Y,(y_j)_{j\in B})$ be some free pointed action of a finite group on a finite rational metric space and let $(\psi,\phi)$ is an embedding from $(F\curvearrowright X_0, (x_i)_{i\in A})$ to $(H\curvearrowright Y,(y_j)_{j\in B})$. Then there exists an embedding $(\bar{\psi},\bar{\phi})$ from $(H\curvearrowright Y,(y_j)_{j\in B})$ to $(G\curvearrowright X, (x_i)_{i\in I})$ such that $\bar{\psi}\circ \psi=\mathrm{id}_F$ and $\bar{\phi}\circ\phi=\mathrm{id}_{X_0}$.
\end{fact}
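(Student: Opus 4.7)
The plan is to derive this fact as an immediate consequence of the abstract extension property of a Fraïssé limit recalled in the preliminaries, applied to the Fraïssé class $\Age$ of pointed free actions of finite groups on finite rational metric spaces. The preceding theorem established the amalgamation property; joint embedding follows from amalgamation over the trivial action on a single point; and $(\alpha_0\colon G\curvearrowright X,(x_i)_{i\in I})$ is by construction the Fraïssé limit of $\Age$. So the abstract extension theorem is available once we recast the data of the statement in its language.

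The first step is to verify that $(F\curvearrowright X_0,(x_i)_{i\in A})$ is itself a member of $\Age$. The group $F$ is finite by assumption; the set $X_0=\bigcup_{i\in A} F\cdot x_i$ is finite of cardinality $|F|\cdot|A|$ by freeness of the action; the distances are rational, inherited from $X$; the restricted action is manifestly free; and $(x_i)_{i\in A}$ remains a legitimate selector on the orbits of this smaller action. Moreover, the natural inclusion exhibits $(F\curvearrowright X_0,(x_i)_{i\in A})$ as a substructure of $(G\curvearrowright X,(x_i)_{i\in I})$ in the precise sense of the embeddings defined at the start of this subsection: the inclusion $F\hookrightarrow G$ is a group monomorphism, distinguished points map to distinguished points, and distances are preserved since $X_0$ carries the restricted metric from $X$.

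Having identified this finite substructure inside the limit, the hypothesis gives an embedding $(\psi,\phi)$ from it into a larger member $(H\curvearrowright Y,(y_j)_{j\in B})$ of $\Age$. The abstract Fraïssé extension property now yields an embedding $(\bar\psi,\bar\phi)$ of $(H\curvearrowright Y,(y_j)_{j\in B})$ into $(G\curvearrowright X,(x_i)_{i\in I})$ whose composition with $(\psi,\phi)$ equals the chosen substructure inclusion. Translating this componentwise gives precisely $\bar\psi\circ\psi=\mathrm{id}_F$ and $\bar\phi\circ\phi=\mathrm{id}_{X_0}$, which is the required conclusion.

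The main (and only mild) obstacle is bookkeeping: one must be sure that the abstract Fraïssé framework described informally in the preliminaries genuinely applies to our category of pointed free actions with the embeddings defined above. But the notions of embedding, substructure, and countable direct limit were set up in this subsection precisely so that this works, and the class has already been shown to be countable up to isomorphism (by enumerating finite pointed free actions on finite rational metric spaces) and to satisfy the required properties, so no real difficulty arises; the fact is essentially the categorical translation of the standard Fraïssé extension property.
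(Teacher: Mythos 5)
Your proposal is correct and matches the paper exactly: the paper offers no separate argument for this Fact, stating it as an immediate consequence of the Fra\"iss\'e theorem applied to the class of pointed free rational actions of finite groups, whose limit is $(\alpha_0\colon G\curvearrowright X,(x_i)_{i\in I})$. Your additional bookkeeping (checking that $(F\curvearrowright X_0,(x_i)_{i\in A})$ is a member of the class and a substructure of the limit) is exactly the implicit content of the paper's one-line derivation.
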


Now let $\mathbb{X}$ be the metric completion of $X$. The action $\alpha_0: G\curvearrowright X$ obviously extends to the action $\alpha: G\curvearrowright \mathbb{X}$ by isometries, which is no longer free though.

The following is a restatement of Theorem \ref{intro_mainthm} from Introduction using the just constructed action $\alpha: G\curvearrowright \mathbb{X}$.
\begin{thm}\label{main_thm}
The action $\alpha: G\curvearrowright \mathbb{X}$ is a universal action in the class of all actions of countable locally finite groups on all separable metric spaces by isometries.
\end{thm}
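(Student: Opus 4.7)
The plan is to build, stage by stage, free pointed rational approximations of the finite slices of $\beta\colon H\curvearrowright Y$ that live inside $\alpha_0\colon G\curvearrowright X$ via the extension property (Fact \ref{extension_fact}), and then recover the embedding of the original (possibly non-free, possibly irrational) action on a separable space by passing to the completion $\mathbb{X}$.

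First I reduce to a countable setting. Choose a countable $H$-invariant dense subset $Y'\subseteq Y$; any $\phi$-equivariant isometric embedding $Y'\hookrightarrow\mathbb{X}$ will extend by continuity to $Y$. Write $H=\bigcup_n H_n$ with $H_n$ finite and increasing (possible since $H$ is locally finite), and $Y'=\bigcup_n Z_n$ with $Z_n$ finite and $H_n$-invariant (enumerate $Y'$ and close its initial segments under $H_n$). Since $(H_n\curvearrowright Z_n)$ is typically neither free nor rational, it does not lie in the Fra\" iss\' e class of free pointed rational actions; I interpolate it through auxiliary members of that class. Choose a fundamental domain $F_n\subseteq Z_n$ for $H_n$, put $\tilde Z_n=H_n\times F_n$ with the free action $g\cdot(h,f)=(gh,f)$ and $H_n$-equivariant projection $\pi_n(h,f)=h\cdot f$, pick an $H_n$-invariant rational approximation $q_n$ of $d_Y|_{Z_n}$ and a rational parameter $\epsilon_n>0$, and declare the distance between two distinct points $(h_i,f_i)$ of $\tilde Z_n$ to be $\max\{q_n(h_1\cdot f_1,h_2\cdot f_2),\epsilon_n\}$. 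A short case analysis confirms that this is a metric and that $H_n$ acts by isometries. Arranging $F_n\subseteq F_{n+1}$, $\epsilon_{n+1}\leq\epsilon_n$, and $q_{n+1}|_{Z_n}=q_n$ turns $(H_n\curvearrowright\tilde Z_n,(e,f)_{f\in F_n})$ into a Fra\" iss\' e chain inside the class.

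Invoking the extension Fact inductively, starting from the trivial action, yields coherent embeddings $\phi_n\colon H_n\hookrightarrow G$ and $\tilde\psi_n\colon\tilde Z_n\hookrightarrow X$ that are isometric, $\phi_n$-equivariant, send distinguished points to distinguished points, and satisfy $\phi_{n+1}|_{H_n}=\phi_n$, $\tilde\psi_{n+1}|_{\tilde Z_n}=\tilde\psi_n$. Set $\phi=\bigcup_n\phi_n\colon H\hookrightarrow G$. For each $y\in Y'$ pick a coherent system of lifts $\tilde y_n\in\pi_n^{-1}(y)$ (available once $y\in Z_n$): since the $\pi_n$-fiber diameter at stage $n$ is at most $\epsilon_n\to 0$, the sequence $\tilde\psi_n(\tilde y_n)$ is Cauchy in $X$, so its limit $\psi(y)$ is well defined in $\mathbb{X}$ and independent of the choice of lifts. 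A direct computation shows that $\psi$ is isometric and $\phi$-equivariant, and extending it by continuity to $Y$ concludes the proof.

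The main obstacle is arranging the construction above as a genuine Fra\" iss\' e chain while still forcing $\epsilon_n\to 0$. Requiring $q_{n+1}|_{Z_n}=q_n$ freezes the approximation error on each pair of points at the moment of its introduction, so the enumeration of $Y'$ and the schedule $(\epsilon_n)$ must be fixed in advance to ensure that every pair eventually appears with sufficiently small error, which still forces the limit $\psi$ to be isometric rather than merely Lipschitz. Moreover, the triangle inequality for the $\max$-metric forces $q_n\geq\epsilon_n$ on distinct pairs, which can be arranged by truncating $q_n$ from below (at a small further cost in approximation). Once this bookkeeping is in place, the extension property transports the entire chain into $\alpha_0$, and the completion $\mathbb{X}$ absorbs the $\epsilon_n$-spread of the fibers, delivering the desired equivariant isometric embedding into $\alpha$.
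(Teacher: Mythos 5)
Your overall strategy --- reduce to a countable dense invariant subset, replace the finite slices of $H\curvearrowright Y$ by free pointed rational actions, transport them into $G\curvearrowright X$ via the extension property, and recover the embedding in the completion $\mathbb{X}$ --- is the same as the paper's. The gap is in how you chain the approximations together. You insist on a \emph{nested} chain of isometric inclusions $\tilde Z_n\subseteq \tilde Z_{n+1}$ inside the Fra\" iss\' e class, and this is incompatible with $\epsilon_n\to 0$ as soon as the action is not free: if $x\neq y$ are two lifts in $\tilde Z_n$ of the same point of $Z_n$, your $\max$-metric places them at distance exactly $\epsilon_n$, and an isometric inclusion into $\tilde Z_{n+1}$ forces that distance to remain $\epsilon_n$ forever, whereas your formula for the metric on $\tilde Z_{n+1}$ demands it be $\epsilon_{n+1}<\epsilon_n$. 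Truncating $q_n$ from below does not help here, since these are pairs with \emph{equal} projections. So either the inclusions are not isometric (and the extension property no longer yields a coherent system $\tilde\psi_n$ with $\tilde\psi_{n+1}\upharpoonright\tilde Z_n=\tilde\psi_n$), or the fiber separation is frozen at its initial positive value, the fibers never collapse in $\mathbb{X}$, and $\psi$ depends on the choice of lifts. A second instance of the same freezing phenomenon breaks isometry even for free actions: requiring $q_{n+1}\upharpoonright Z_n=q_n$ fixes, for each pair $(y,y')$, a generically nonzero rational-approximation error at the stage the pair first appears, and since in a nested coherent system $\tilde\psi_n(\tilde y_n)$ is eventually constant, that error persists to the limit. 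Your concluding assertion that the bookkeeping ``still forces the limit $\psi$ to be isometric rather than merely Lipschitz'' is precisely the step that fails.

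The paper avoids both problems by \emph{not} nesting the approximations. Successive slices $Z_n$ and $Z_{n+1}$, each equipped with an independently chosen and strictly better rational metric $p_n$, $p_{n+1}$, are realized as \emph{disjoint} copies inside a single member $Z_n\coprod Z_{n+1}$ of the class, with corresponding base points at distance $1/2^n$; this is the content of Lemma \ref{glue_lem}, and the extension property is then applied along the chain of these two-copy actions. The images of the base points form genuinely non-constant Cauchy sequences in $X$, and only their limits in the completion $\mathbb{X}$ realize the exact distances of $Y$ and collapse the non-free fibers. To repair your argument you need to replace the nested chain by this device of drifting disjoint copies, i.e., to prove and invoke an analogue of Lemma \ref{glue_lem}.
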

Before we prove the theorem we shall need few notions and lemmas.
\begin{defin}
Let $X$ be a set equipped with two pseudometrics $d$ and $p$. We define the distance $D(d,p)$ between these two pseudometrics as their supremum distance, i.e. $$D(d,p)=\sup_{x,y\in X} |d(x,y)-p(x,y)|.$$
\end{defin}
\begin{lem}\label{rational_approx_lem}
Let $(H\curvearrowright X,(x_i)_{i\in I})$ be a free pointed action by isometries of some finite group $H$ on a finite \underline{pseudometric} space $X=\bigcup_{i\in I} H\cdot x_i$ with pseudometric $d$. Then for any $\varepsilon>0$ there exists a rational \underline{metric} $p$ on $X$ such that the free action of $H$ on $(X,p)$ is still by isometries and $D(d,p)<\varepsilon$.
\end{lem}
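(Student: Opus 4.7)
The plan is to construct $p$ in two stages: first build a rational $H$-invariant \emph{pseudo}metric $p_0$ within $\varepsilon/2$ of $d$, and then add a small rational multiple of the discrete metric to turn the pseudometric into a genuine metric.

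For the approximation step, I would fix an integer $N>2/\varepsilon$ and round each distance up to the next multiple of $1/N$: set $d'(y,z):=\lceil Nd(y,z)\rceil/N$, so $d\le d'\le d+1/N$ pointwise. The function $d'$ is symmetric, rational-valued, and $H$-invariant (since $d$ is), but naive rounding can break the triangle inequality, so I would pass to the associated length pseudometric
\begin{equation*}
p_0(y,z):=\min\Bigl\{\sum_{l=1}^{n}d'(z_{l-1},z_l)\,:\,z_0=y,\ z_n=z,\ z_l\in X\Bigr\},
\end{equation*}
where the minimum exists because $d'$ takes only finitely many nonnegative rational values on the finite set $X$. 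By construction $p_0$ is a pseudometric. A one-edge chain gives $p_0\le d'$; conversely, since $d'\ge d$ and $d$ itself satisfies the triangle inequality, every chain sum for $d'$ dominates the corresponding chain sum for $d$, which dominates $d(y,z)$, so $p_0\ge d$. Hence $d\le p_0\le d+1/N$, i.e.\ $D(d,p_0)\le 1/N<\varepsilon/2$. The length-pseudometric construction is manifestly $H$-equivariant, and each $p_0(y,z)$ is an integer multiple of $1/N$, hence rational.

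For the metric correction, let $\Delta(y,z):=1$ if $y\neq z$ and $0$ otherwise (the discrete metric on $X$), pick a rational $\delta$ with $0<\delta<\varepsilon/2$, and put $p:=p_0+\delta\Delta$. A sum of a pseudometric and a metric is a metric, so $p(y,z)>0$ whenever $y\neq z$; it is rational because both summands are; and it is $H$-invariant because $\Delta$ is preserved by every bijection of $X$ and $p_0$ is $H$-invariant. Finally $D(d,p)\le D(d,p_0)+\delta<\varepsilon/2+\varepsilon/2=\varepsilon$. The group action itself is unchanged, so it remains free, and it is by isometries of $(X,p)$ because $p$ is $H$-invariant.

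There is no serious obstacle: the only point requiring thought is that independently rounding each $d(y,z)$ to a rational can violate the triangle inequality, which is precisely why the length-pseudometric trick is used — it absorbs the rounding errors into a single controlled one-sided bound $d\le p_0\le d+1/N$, after which the discrete-metric perturbation is a routine way to promote a pseudometric to a metric without disturbing the $H$-symmetry or the approximation.
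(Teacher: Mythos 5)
Your proof is correct, but it takes a genuinely different route from the paper's. The paper perturbs the finitely many \emph{values} of the pseudometric directly: it enumerates the distances $d_1<\dots<d_n$ occurring in $(X,d)$ and replaces each $d_i$ by a rational $p_i$ chosen in the window $\bigl(d_i+\tfrac{(n-i)\varepsilon}{n+1},\,d_i+\tfrac{(n+1-i)\varepsilon}{n+1}\bigr)$, so that larger distances receive strictly smaller bumps; a short computation shows this graded choice preserves the triangle inequality outright, and since even the value $0$ (between distinct points) gets a positive bump, the result is already a genuine metric, with $H$-invariance automatic because $p(x,y)$ depends only on $d(x,y)$. You instead round up to multiples of $1/N$, repair the triangle inequality by passing to the induced shortest-path pseudometric, and then separate points by adding $\delta$ times the discrete metric. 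All the steps in your version check out: the one-sided bound $d\le p_0\le d+1/N$, the attainment and rationality of the chain minimum (the chain sums lie in the discrete set $\tfrac{1}{N}\Int_{\ge 0}$, or one restricts to loop-free chains in the finite set $X$), the equivariance of the length construction and of the discrete metric, and the final $\varepsilon/2+\varepsilon/2$ estimate. Your method is arguably more robust, avoiding any case analysis on which of the three distances in a triangle is largest, while the paper's is a single explicit value substitution with no minimization over chains; the difference is immaterial for the rest of the paper, which only invokes the statement of the lemma.
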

\begin{proof}[Proof of Lemma \ref{rational_approx_lem}]
Enumerate by $(d_i)_{i\leq n}$ the distances from $(X,d)$ in an increasing order. Also, we may suppose that $\varepsilon<\min\{|k-l|:k\neq l,k,l\in\{d_i:i\leq n\}\cup\{0\}\}$.

For $i\leq n$, let $p_i$ be an arbitrary rational number from the open interval $(d_i+\frac{(n-i)\varepsilon}{n+1},d_i+\frac{(n+1-i)\varepsilon}{n+1})$. Now for a pair $x,y\in X$ set $p(x,y)=0$ if $x=y$ and for $x\neq y\in X$ set $$p(x,y)=p_i\text{ iff }d(x,y)=d_i.$$

Let us check that $p$ is a rational metric. By definition it is rational. It is clear that $p(x,y)=0$ iff $x=y$, and that it is symmetric, so we must just check the triangle inequality. Take a triple $x,y,z\in X$. We check that $p(x,z)\leq p(x,y)+p(y,z)$. If either $d(x,y)$ or $d(y,z)$ is bigger or equal to $d(x,z)$, then the same is true for $p(x,y)$, $p(y,z)$, $p(x,z)$ by definition. So we may suppose that $d(x,z)>\max\{d(x,y),d(y,z)\}$. Then by setting $d(x,z)=d_i$, $d(x,y)=d_j$ and $d(y,z)=d_k$, we have that $i>\max\{j,k\}$. We must check that $p_i\leq p_j+p_k$. However, we have $$p_i\leq d_i+\frac{(n+1-i)\varepsilon}{n+1}\leq d_j+\frac{(n-j)\varepsilon}{n+1}+d_k+\frac{(n-k)\varepsilon}{n+1}\leq p_j+p_k,$$ and we are done.
\end{proof}
\begin{lem}\label{glue_lem}
Let $H_1\leq H_2$ be two finite groups and $I\subseteq J$ two finite sets. Let $d$ be a metric on $X=\bigcup_{i\in I} H_1\cdot x_i$ and $p$ be a metric on $Y=\bigcup_{j\in J} H_2\cdot x_j\supseteq X$. Suppose that the canonical actions of $H_1$ on $(X,d)$, resp. of $H_2$ on $(Y,p)$ are by isometries. Suppose further that $D(d,p\upharpoonright X)\leq\varepsilon$. Then there exists a metric $\rho$ on $Z$, the disjoint union $X\subseteq\bigcup_{i\in I} H_2\cdot x_i\coprod \bigcup_{j\in J} H_2\cdot x_j=Y$ which is equal to $\bigcup_{i\in I} H_2\cdot x_i\cup \bigcup_{j\in J} H_2\cdot y_j$ such that
\begin{itemize}
\item $\rho$ extends both $d$ and $p$ on the corresponding subspaces,
\item for every $i\in I$, $\rho(x_i,y_i)\leq\varepsilon$,
\item the canonical action of $H_2$ on $Z$ is by isometries.

\end{itemize}
\end{lem}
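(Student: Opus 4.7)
The approach is to reduce the construction of $\rho$ to the amalgamation of actions provided by Theorem~\ref{thm_amal}, after first building a small auxiliary $H_1$-space that records the ``$\varepsilon$-bridge'' between $X$ and the copy of $X$ sitting inside $Y$.

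Concretely, write $Y_0:=\bigcup_{i\in I}H_1\cdot y_i\subseteq Y$ for the $H_1$-invariant subspace spanned by the $y_i$ with $i\in I$, equipped with $p\upharpoonright Y_0$. Form the disjoint union $W:=X\sqcup Y_0$ with the natural $H_1$-action on each summand, and define $\sigma$ as the graph metric on $W$ whose weighted edges are: (i) all pairs inside $X$ with weight $d$; (ii) all pairs inside $Y_0$ with weight $p$; and (iii) for each $h\in H_1$ and $i\in I$, the bridge edge $h\cdot x_i\sim h\cdot y_i$ of weight $\varepsilon$. The key claim of this step is that $\sigma$ is actually a metric, restricting to $d$ on $X$ and to $p\upharpoonright Y_0$ on $Y_0$, with $\sigma(x_i,y_i)=\varepsilon$ and $H_1$ acting by isometries. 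The nontrivial direction---that a path which detours into the opposite summand cannot undercut a direct distance---uses the hypothesis $D(d,p\upharpoonright X)\leq\varepsilon$: a single $Y_0$-excursion starts with a bridge of weight $\varepsilon$, has $p$-weight at least $d-\varepsilon$ for its $X$-projected endpoints, and ends with another bridge of weight $\varepsilon$, so the total excursion weight exceeds the direct $d$-distance between those endpoints by at least $\varepsilon$. A short induction on the number of bridge crossings shows that no multi-excursion path beats the direct $d$-edge, and the symmetric calculation works when the ambient summand is $Y_0$.

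With $(W,\sigma)$ in hand, I would apply Theorem~\ref{thm_amal}, or its non-free extension from the remark following it if the given actions fail to be free, to the triple $G_0=H_1$ acting on $X_0=Y_0$, $G_1=H_2$ acting on $X_1=Y$ with metric $p$, and $G_2=H_1$ acting on $X_2=W$ with metric $\sigma$; both $X_1$ and $X_2$ contain $X_0$ as a common $H_1$-invariant subspace with matching metric. The amalgamated group $H_2\ast_{H_1}H_1$ collapses to $H_2$, and tracing through the construction in the proof of Theorem~\ref{thm_amal} identifies the amalgamated set as exactly $Z=X_{H_2}\sqcup Y$: the $H_2$-orbits of the distinguished points of $X_1$ reconstruct $Y$, while the $H_2$-orbits of the fresh distinguished points $x_i$ ($i\in I$) coming from $X_2\setminus X_0$ produce the new piece $\bigcup_{i\in I}H_2\cdot x_i$. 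The resulting metric $\rho$ is $H_2$-invariant and extends both $\sigma$ on $W$---hence $d$ on $X$---and $p$ on $Y$, while $\rho(x_i,y_i)\leq\sigma(x_i,y_i)=\varepsilon$, as required.

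The main obstacle is the verification in the first step: that the graph metric $\sigma$ on the auxiliary $W$ genuinely restricts to $d$ on $X$ and to $p\upharpoonright Y_0$ on $Y_0$, without any detour through the other summand giving a shortcut. Once this has been secured, the remainder is a direct invocation of the amalgamation theorem and a routine reading-off of the amalgamated structure.
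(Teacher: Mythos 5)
Your proposal is correct, but it takes a genuinely different route from the paper. The paper proves the lemma by a single direct construction: it puts a weighted graph on all of $Z$ at once (edges inside $X$, edges inside $Y$, the $\varepsilon$-bridges $h\cdot x_i\sim h\cdot y_i$ for $h\in H_2$, \emph{and} a third family of edges spreading $d$ over the $H_1$-cosets of $\bigcup_{i\in I}H_2\cdot x_i$), and then runs an induction on path length with a nontrivial case analysis --- the delicate case being a path between two points of $Y$ that detours through $\bigcup_{i\in I}H_2\cdot x_i\setminus X$, where one has to observe that such a detour forces $g^{-1}h\in H_1$ and translate back. You instead factor the lemma into two pieces: a small two-summand gluing $W=X\sqcup Y_0$ carrying only the $H_1$-action, whose verification is exactly the easy half of the paper's induction (each excursion costs $\varepsilon+(d-\varepsilon)+\varepsilon\ge d$ and excursions are cleanly delimited by bridges), followed by an invocation of Theorem~\ref{thm_amal} with $G_0=G_2=H_1$, $G_1=H_2$, using $H_2\ast_{H_1}H_1\cong H_2$; the hard case of the paper's induction is thereby absorbed into the already-proved amalgamation theorem. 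The index-set bookkeeping checks out ($I_0=I$ indexing the $y_i$, $I_1=J$, $I_2=I\sqcup I^x$, so $I_1\cap I_2=I_0$ and $X_3=Z$), and all three bullet points follow from the isometric inclusions of $Y$ and $W$ into the amalgam. What the paper's version buys is an explicit description of the graph metric on $Z$, which it reuses later (e.g.\ in Fact~\ref{glue_fact} and the extension-property arguments); what yours buys is economy and modularity, at the price of a black-box dependence on Theorem~\ref{thm_amal}. The only caveat worth flagging is the degenerate case $\varepsilon=0$, where your $\sigma$ (like the paper's $\rho$) is only a pseudometric unless one quotients; this does not affect any application of the lemma.
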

\begin{proof}[Proof of Lemma \ref{glue_lem}]
As before, we define a weighted graph structure on $Z$. A pair $x,y$ is connected by an edge if and only if 
\begin{itemize}
\item either $x,y\in X$, resp. $x,y\in Y$, in such a case $w(x,y)=d(x,y)$, resp. $w(x,y)=p(x,y)$;
\item or there are $i\in I\subseteq J$ and $h\in H_2$ such that $x=h\cdot x_i$ and $y=h\cdot y_i$ or vice versa, in such a case we set $w(x,y)=\varepsilon$;
\item or $x=g\cdot x_i$, $y=h\cdot x_j$ such that $i\in I$ and $g^{-1}h\in H_1$; in such a case we set $w(x,y)=d(x_i,g^{-1}h x_j)$.

\end{itemize}
 
It is again immediate that the graph is connected, thus it determines a metric $\rho$ on $Z$, and the canonical action of $H_2$ on $Z$ is by isometries. We need to check that $\rho$ extends $d$ and $p$. We check both simultaneously.

Fix $x,y$ such that either $x,y\in X$ or $x,y\in Y$. Suppose that $\rho(x,y)<d(x,y)$ (it is again clear that $\rho(x,y)\leq d(x,y)$), resp. $\rho(x,y)<p(x,y)$ depending on where $x,y$ lie. Then there is an edge path $e_1\ldots e_n$ such that $w(e_1)+\ldots+w(e_n)<d(x,y)$, resp. $w(e_1)+\ldots+w(e_n)<p(x,y)$. We shall again prove the claim by induction on the length of the edge path. The case $n=1$ is clear. Suppose we have proved it for all $l<n$ and all edge paths of length at most $l$ between all pairs $x,y\in X$ and all pairs $x,y\in Y$. We may suppose that there are not two neighboring edges $e_i$ and $e_{i+1}$ such that both of them lie in $X$ or both of them lie in $Y$, for otherwise we could contract them into a single edge using triangle inequality in $X$, resp. $Y$.

Suppose first that $x,y\in X$ and let $x=g\cdot x_i$ and $y=h\cdot x_j$, for some $g,h\in H_1$ and $i,j\in I$. Denote by $\bar{X}$ the set $\bigcup_{i\in I} H_2\cdot x_i\supseteq X$. Notice that there is no edge between an element $z\in X$ and an element $z'\in \bar{X}\setminus X$. Thus we may suppose that $e_1$ is an edge between $x=g\cdot x_i$ and $g\cdot y_i$ and $e_n$ is an edge between $h\cdot y_j$ and $h\cdot x_j=y$. Indeed, otherwise either $e_1$ is an edge within $X$, so we may use the inductive assumption for the subpath $e_2,\ldots,e_n$, or $e_n$ is an edge within $X$ and we may use the inductive assumption for the subpath $e_1,\ldots,e_{n-1}$. It follows that $e_2,\ldots,e_{n-1}$ is an edge path of length strictly less than $n$ between two elements of $Y$, thus by inductive assumption we may suppose that $n=3$ and $e_2$ is an edge between $g\cdot y_i$ and $h\cdot y_j$ and we have $$w(g\cdot y_i,h\cdot y_j)=p(g\cdot y_i,h\cdot y_j)\geq d(g\cdot x_i,h\cdot x_j)-\varepsilon.$$ However, since $w(e_1)=w(e_3)=\varepsilon$, we get that $$d(x,y)=d(g\cdot x_i,h\cdot x_j)<w(e_1)+w(e_2)+w(e_3),$$ a contradiction.

Suppose now that $x,y\in Y$ and again let $x=g\cdot y_i$ and $y=h\cdot y_j$, for some $g,h\in H_2$ and $i,j\in J$. As in the paragraph above, we may without loss of generality assume that $e_1$ is an edge between $x=g\cdot y_i$ and $g\cdot x_i$ and $e_n$ is an edge between $h\cdot x_j$ and $h\cdot y_j=y$; thus in particular $i,j\in I$. If both $g,h\in H_1$ then $g\cdot x_i,h\cdot x_j\in X$ and we are done by the same argument as in the paragraph above. So suppose that at least one of $g,h$ is in $H_2\setminus H_1$. Say $g\in H_2\setminus H_1$, i.e. $g\cdot x_i\in \bar{X}\setminus X$. Since there is no edge between an element from $X$ and an element from $\bar{X}\setminus X$ there exists a minimal $l\leq n$ such that $e_2,\ldots,e_{l-1}$ is a path within $\bar{X}\setminus X$ and $e_l$ is an edge between an element from $\bar{X}\setminus X$ and an element from $Y$. If $l<n$ then we use the inductive hypothesis, so suppose that $l=n$, i.e. the subpath $e_2,\ldots,e_{n-1}$ is within $\bar{X}\setminus X$. Note also that there is an edge between elements $f\cdot x_k$ and $f'\cdot x_{k'}$ in $\bar{X}\setminus X$ if and only if $f^{-1}f'\in H_1$. It follows that $g^{-1}h\in H_1$. Translating the whole path $e_1,\ldots,e_n$ by $g^{-1}$ we does not change the distance ($g^{-1}$ acts as an isometry). Thus we may assume that $g=1$ and $h\in H_1$. However, then we are again done by an argument used above.
\end{proof}
\begin{proof}[Proof of Theorem \ref{main_thm}]
Let $H\curvearrowright Z$ be an action of an infinite locally finite group by isometries on a separable metric space. It is sufficient to prove the theorem in case $Z$ is countable. Indeed, in the general case we would find a countable dense $H$-invariant subspace $Z'$. Then the action on the metric completion of $Z'$, thus in particular on $Z$, is uniquely determined by its behavior on $Z'$. Since the space $\mathbb{X}$ is complete, we are done.

So assume that both $H$ and $Z$ are countable. Without loss of generality we assume that $Z$ has infinitely many $H$-orbits and let $(z_n)_{n\in\Nat}$ be a sequence which picks one single element from each orbit, i.e. we may write the metric space $Z$ as $\bigcup_{n\in\Nat} H\cdot z_n$ with a pseudometric $d$.   Also, without loss of generality we shall assume that $H$ is infinite and write $H$ as $H_1\leq H_2\leq H_3\leq \ldots$ which is an increasing chain of finite subgroups of $H$ whose union is $H$. Moreover, for every $n$ define $Z_n$ to be the finite pseudometric subspace $\bigcup_{i\leq n} H_n\cdot z_i\subseteq Z$.

For every $n$ consider the free pointed action $(H_n\curvearrowright Z_n, (z_i)_{i\leq n})$. By Lemma \ref{rational_approx_lem} there exists a rational metric $p_n$ on $Z_n$ such that $D(d\upharpoonright Z_n,p_n)<1/2^{n+1}$. In particular, we get a free action of $H_n$ on $(Z_n,p_n)$ by isometries. It follows that for every $n$ we have $D(p_n,p_{n+1}\upharpoonright Z_n)<1/2^n$. By Lemma \ref{glue_lem}, for every $n$ we can define a rational metric $\rho_n$ on a disjoint union of $Z_n\coprod Z_{n+1}=\left(\bigcup_{i\leq n} H_n\cdot z_i\right)\cup \left(\bigcup_{j\leq n+1} H_{n+1}\cdot z'_j\right)$ which is free and by isometries, and which extends the original metrics and for $i\leq n$ we have $\rho_n(z_i,z'_i)=1/2^n$. Now by a successive application of Fact \ref{extension_fact}, the extension property of $(G\curvearrowright X,(x_{i\in I})$, we obtain
\begin{itemize}
\item an increasing chain of finite subgroups $H'_1\leq H'_2\leq\ldots\leq G$ and isomorphisms $\psi_i:H'_i\rightarrow H_i$, for $i\in \Nat$, such that $\psi_i\subseteq \psi_{i+1}$ for every $i$. Thus $\psi=\bigcup_i \psi$ is an isomorphism between $H'=\bigcup_n H'_n$ and $H$;
\item isometric embeddings $\phi_n:Z_n\coprod Z_{n+1}\hookrightarrow X$ such that $\phi_n\upharpoonright Z_{n+1}=\phi_{n+1}\upharpoonright Z_{n+1}$, for every $n$;
\item for every $n$, we have that the free actions $H_n\curvearrowright Z_n$ and $H'_n\curvearrowright \phi_n[Z_n]$ are isometric.

\end{itemize}
For every $i$ we have that the sequence $(\phi_n(z_i))_{n\geq i}$ is Cauchy, since $d_X(\phi_n(z_i),\phi_{n+1}(z_i))=1/2^n$. Let $y_i\in \mathbb{X}$ be the limit of that sequence. Consider the subset $Z'=\bigcup_{i\in\Nat} H'\cdot y_i\subseteq \mathbb{X}$. It follows it is naturally isometric to $Z$. Indeed, take any $x,y\in Z$ and write them as $x=h\cdot z_i$ and $y=g\cdot z_j$ for some $h,g\in H$ and $i,j\in\Nat$. Since $H$ and $H'\leq G$ are isomorphic, let $h',g'$ be the corresponding elements of $H'\leq G$ and consider the elements $h'\cdot y_i, g'\cdot y_j\in Z'\subseteq \mathbb{X}$. Then $$d(h'\cdot y_i,g'\cdot y_j)=\lim_n d(h'\cdot \phi_n(z_i),g'\cdot \phi_n(z_j))=\lim_n d_Z(h\cdot z_i,g\cdot z_j)+o(n),$$ where $o(n)\in [0,1/2^n]$, so the claim is proved.

Finally, consider the restriction of the action $G\curvearrowright \mathbb{X}$ on $H'\curvearrowright \bigcup_{i\in\Nat} H'\cdot y_i$. It follows from the approximation above that it is isometric to the action $H\curvearrowright Z$, and we are done.
\end{proof}
Finally, we show that the group $G$ is isomorphic to the Hall's universal locally finite group and that the space $X$ is isometric to the rational Urysohn space, so the completion $\mathbb{X}$ is isometric to the Urysohn universal space. It is just the use of the extension property of $G\curvearrowright X, (x_i)_{i\in I}$ from Fact \ref{extension_fact}. These are standard arguments, so we omit some details.

For the former it is necessary to show that $G$ has the extension property. That is, whenever $F\leq G$ is some finite subgroup and $H\geq F$ is some abstract finite supergroup of $F$, i.e. a supergroup of $F$ that does not in principle lie in $G$, then we can actually find a copy $H'$ of $H$ within $G$ so that it is a supergroup of $F$ there, i.e. $F\leq H'\leq G$.

So pick some finite subgroup $F\leq G$ and some abstract supergroup $H\geq F$. The Fra\" iss\' e limit $G\curvearrowright X, (x_i)_{i\in I}$ is a direct limit of a sequence of some finite actions  $(G_n\curvearrowright X_n, (x_i)_{i\in I_n})_{n\in\Nat}$. Take $n$ so that $F\leq G_n$ and consider the subaction $F\curvearrowright X'_n,(x_i)_{i\in I_n}$, where $X'_n=\bigcup_{i\in I_n} F\cdot x_i$. It is possible to use Lemma \ref{glue_lem} to extend this action to an action of $H$ on $\bigcup_{i\in I_n} H\cdot x_i$. Then we use the extension property of $G\curvearrowright X, (x_i)_{i\in I}$ to find the action $H\curvearrowright \bigcup_{i\in I_n} H\cdot x_i$ within the universal one, thus in particular to find a copy of $H$ within $G$ that is a supergroup of $F$.

Now for the latter, it is necessary to show that the countable rational metric space $X$ has the extension property. That is, whenever $A\subseteq X$ is some finite subspace and $A\subseteq B$ is finite abstract extension, still a rational metric space, then we can actually find this extension within $X$. So take some finite $A\subseteq X$. As above, find some $n$ so that $A\subseteq X_n$. By extending the metric by metric amalgamation if necessary we may assume that $A=X_n$. Set $I'_n=I\cup (B\setminus X_n)$ and $X'_n=\bigcup_{i\in I'_n} G_n\cdot x_i$. Clearly, $A=X_n\subseteq B\subseteq X'_n$. By using the technique with defining a weighted graph structure on $X'_n$ we can extend the metric from $B$ to $X'_n$ so that $G_n$ acts on $X'_n$ by isometries. Then we use the extension property of $G\curvearrowright X, (x_i)_{i\in I}$ to get a copy of $X'_n$, thus also of $B$, in $X$ so that it is an extension of $A$ there.\\

\subsection{Amalgamation property for actions of abelian groups}
A possible modification of the ideas above would be to consider the class of (pointed free) actions of abelian groups on finite metric spaces. One might expect that it has the amalgamation property as well and the (completion of the) limit will be a universal action of $\bigoplus_{n\in\Nat} \Rat/\Int$ on the Urysohn space $\Ur$, where $\bigoplus_{n\in\Nat} \Rat/\Int$ is easily checked to be the Fra\" iss\' e limit of the class of all finite abelian groups. We show that it is not the case. More precisely, we prove the following proposition.
\begin{prop}
The class of all pointed free actions of finite abelian groups on finite metric space does not have the amalgamation property.
\end{prop}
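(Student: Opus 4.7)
The plan is to exhibit an explicit triple $A_0,A_1,A_2$ of pointed free actions of $\mathbb{Z}/2$'s on finite metric spaces for which no abelian amalgam can exist, deriving a contradiction from the commutativity $ab=ba$ forced in the amalgam group. I take $G_0=\{e\}$, $X_0=\{p,q\}$ with $d(p,q)=2$, and $I_0=I_1=I_2=\{p,q\}$, and $G_1=\langle a\rangle\cong\mathbb{Z}/2$, $G_2=\langle b\rangle\cong\mathbb{Z}/2$. The distances in $X_1$ will be chosen so that $a$ moves $p$ and $q$ both by $3$ while ``swapping'' the cross-distances to be small, namely $d(p,ap)=d(q,aq)=3$ and $d(p,aq)=d(ap,q)=1$ (so $d(ap,aq)=d(p,q)=2$ is forced by the isometry of $a$). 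The distances in $X_2$ will be asymmetric: $d(p,bp)=1$, $d(q,bq)=5$, and $d(p,bq)=d(bp,q)=3$ (so $d(bp,bq)=2$).

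First I would verify that $X_1$ and $X_2$ are valid metric spaces; this reduces to checking the four relevant triangles in each, and in both cases the ``extreme'' triangles are degenerate (for $X_1$, $d(p,ap)=3=d(p,q)+d(q,ap)=2+1$; for $X_2$, $d(q,bq)=5=d(p,q)+d(p,bq)=2+3$) while the rest are clearly satisfied. I would also note that the given inclusions $A_0\hookrightarrow A_i$ exist trivially.

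Next, I would suppose for contradiction that a pointed free amalgam $A_3=(G_3\curvearrowright X_3,(x_i))$ exists with $G_3$ finite abelian. Since $G_0=\{e\}$, the subgroup $\langle a,b\rangle\leq G_3$ is $\mathbb{Z}/2\oplus\mathbb{Z}/2$, and crucially $ab=ba$ in $G_3$. I then analyze $d_{X_3}(p,abq)$ from two sides. Using that $a$ acts by isometries and that $a^{-1}(abq)=bq$ and $a^{-1}(aq)=q$, the isometry gives $d(aq,abq)=d(q,bq)=5$; then the reverse triangle inequality yields
\[
d(p,abq)\geq d(aq,abq)-d(p,aq)=5-1=4.
\]
On the other hand, using that $b$ acts by isometries and that \emph{in the abelian group $G_3$} we have $b^{-1}(ab)=a$, the isometry gives $d(bp,abq)=d(p,b^{-1}abq)=d(p,aq)=1$; the triangle inequality then yields
\[
d(p,abq)\leq d(p,bp)+d(bp,abq)=1+1=2.
\]
Combining, $4\leq d(p,abq)\leq 2$, which is absurd. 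Hence no amalgam exists.

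The main obstacle was finding the specific distances producing this clash. The guiding idea was to make the ``$b$-displacement'' of $q$ much larger than the ``$a$-cross-distance'' $d(p,aq)$, so that the reverse triangle inequality forces $d(p,abq)$ to be large, while simultaneously making the ``$b$-displacement'' of $p$ so small that the ordinary triangle inequality forces $d(p,abq)$ to be small; the identification $b^{-1}ab=a$ needed to equate $d(bp,abq)$ with $d(p,aq)$ is precisely what the abelian relation supplies (in the free product $G_1\ast G_2$ the element $b^{-1}ab$ is distinct from $a$, and the same constraint is not available, consistently with Theorem~\ref{thm_amal}).
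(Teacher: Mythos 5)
Your proof is correct and follows essentially the same strategy as the paper's: amalgamate two free $\Int/2\Int$-actions over the trivial action on a common finite base, and use the commutativity $ab=ba$ forced in any abelian amalgam to turn $ab\cdot x$ into both $a(bx)$ and $b(ax)$ and thereby violate the triangle inequality (the paper uses three base points and the single chain $18\le 8+1+8=17$, while you use two base points and play a lower bound against an upper bound for $d(p,abq)$). One cosmetic remark: your claim that $\langle a,b\rangle\cong\Int/2\Int\oplus\Int/2\Int$ is not actually forced, since $G_0$ is trivial and the images of $a$ and $b$ in $G_3$ could a priori coincide, but your computation only uses $a^2=b^2=e$ and $ab=ba$, so the argument is complete as written.
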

\begin{proof}
In the proof we stick to the same notation we used for actions of general non-abelian groups. Let us consider actions $(G_i\curvearrowright X_i, (x_1,x_2,x_3))$, for $i=0,1,2$, where $G_0$ is the trivial group and $G_1=G_2=\Int/2\Int$. We denote the single non-zero element of $G_1$ as $g$ and the single non-zero element of $G_2$ as $h$. The embeddings of $(G_0\curvearrowright X_0, (x_1,x_2,x_3))$ into $(G_1\curvearrowright X_1, (x_1,x_2,x_3))$, resp. $(G_2\curvearrowright X_2, (x_1,x_2,x_3))$ are obvious. We set $d(x_1,x_2)=d(x_2,x_3)=10$ and $d(x_1,x_3)=16$. On $X_1=\bigcup_{i\leq 3} G_1\cdot x_i$, we then have $d(g\cdot x_1,g\cdot x_2)=d(g\cdot x_2,g\cdot x_3)=10$ and $d(g\cdot x_1,g\cdot x_3)=18$. Additionally, we set $d(x_1,g\cdot x_2)=d(x_2,g\cdot x_3)=d(g\cdot x_1,x_2)=d(g\cdot x_2,x_3)=8$. This gives $X_1$ a structure of a connected weighted graph on which $G_1$ acts by preserving the weighted graph structure. So we can define a metric on $X_1$ as the graph metric. It is easy to check that this metric extends $d$ and $G_1$ acts on $X_1$ by isometries.

On $X_2=\bigcup_{i\leq 3} G_2\cdot x_i$, we also have $d(h\cdot x_1,h\cdot x_2)=d(h\cdot x_2,h\cdot x_3)=10$ and $d(h\cdot x_1,h\cdot x_3)=18$. Additionally, we set $d(x_2, h\cdot x_2)=1$, $d(x_1,h\cdot x_3)=d(h\cdot x_1, x_3)=18$. This again gives $X_2$ a structure of a connected weighted graph on which $G_2$ acts by preserving the weighted graph structure. So, as for $X_1$, we define a metric on $X_2$ as the graph metric, which clearly extends $d$ and $G_2$ acts on $X_2$ by isometries.

We claim that these two actions cannot be amalgamated over $G_0\curvearrowright X_0$. Suppose that there is an amalgam action of some $G_3$ on $X_3=\bigcup_{i\leq 3} G_3\cdot x_i$. We denote the metric on $X_3$ again just by $d$. Moreover, we denote the image of $g\in G_1$, resp. of $h\in G_2$, in $G_3$ again by $g$, resp. by $h$. Then we have $$18=d(x_1,h\cdot x_3)\leq d(x_1,g\cdot x_2)+d(g\cdot x_2,(h+g)\cdot x_2)+d((h+g)\cdot x_2,h\cdot x_3)=8+1+8=17,$$ a contradiction.
\end{proof}
However, we do not know if this class has the \emph{cofinal} amalgamation property. That is, whether there exists a proper subclass $\mathcal{C}$ having the amalgamation property such that every pointed free action of an abelian group on a finite metric space embeds to an action from $\mathcal{C}$. The existence of such a class would also imply the existence of a universal action of $\bigoplus_{n\in\Nat} \Rat/\Int$ on the Urysohn space by isometries. So in particular the following is left open.
\begin{question}
Does there exist an action of a countable torsion abelian group on a separable metric space by isometries which is universal for the class of all actions of torsion abelian groups on all separable metric spaces?
\end{question}
\subsection{Non-universality results}
In this subsection we justify why we work with Definition \ref{def_univaction} rather than with Definition \ref{def_univaction2}, where a different notion of universality was defined.
\begin{thm}\label{no_univaction}
Let $G$ be a countably infinite group. Then there is no universal action of $G$ in both the class of all actions of $G$ on all separable metric spaces and the class of all actions of $G$ on all separable Banach spaces.
\end{thm}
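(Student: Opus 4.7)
The plan is to produce continuum-many left-invariant metrics on $G$ whose associated free actions cannot all be $G$-equivariantly isometrically embedded into a single separable target, thereby ruling out any universal object. Enumerate $G\setminus\{e\}$ as $\{g_1,g_2,\ldots\}$ and group its elements into the pairs $\{g_n,g_n^{-1}\}$ (involutions forming singletons). For each $s\in\{0,1\}^{\Nat}$, I would define a length function $\ell_s$ on $G$ by $\ell_s(e)=0$ and $\ell_s(g_n)=\ell_s(g_n^{-1}):=1+s(n)$. Because every non-zero value of $\ell_s$ lies in $\{1,2\}$, the triangle inequality is automatic: whenever $gh\ne e$, either one of $g,h$ is trivial and equality holds, or $\ell_s(gh)\le 2\le \ell_s(g)+\ell_s(h)$. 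Thus $d_s(g,h):=\ell_s(g^{-1}h)$ is a left-invariant metric on $G$, and for $s\ne s'$ there is some $n$ with $|\ell_s(g_n)-\ell_{s'}(g_n)|=1$.

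Next, I would consider the free isometric action $G\curvearrowright(G,d_s)$ by left multiplication and assume towards contradiction that a universal action $\beta\colon G\curvearrowright Y$ on a separable metric space exists in the sense of Definition \ref{def_univaction2}. For each $s$ one then obtains a $G$-equivariant isometric embedding $\iota_s\colon(G,d_s)\hookrightarrow Y$, and putting $y_s:=\iota_s(e)$ yields $d_Y(g\cdot y_s,y_s)=\ell_s(g)$ for every $g\in G$. The elementary triangle-inequality bound
$$|d_Y(g\cdot y_s,y_s)-d_Y(g\cdot y_{s'},y_{s'})|\le 2\,d_Y(y_s,y_{s'})$$
applied to the $g=g_n$ witnessing $|\ell_s(g_n)-\ell_{s'}(g_n)|=1$ forces $d_Y(y_s,y_{s'})\ge 1/2$. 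Hence $\{y_s:s\in\{0,1\}^{\Nat}\}$ is an uncountable $1/2$-separated subset of $Y$, contradicting separability.

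For the Banach space case the same argument applies once each action $G\curvearrowright(G,d_s)$ is realized on a separable Banach space. The standard Lipschitz-free (Arens--Eells) space $F(G,d_s)$ does the job: it is separable because $G$ is countable, $G$ acts on it by affine isometries extending left multiplication, and the canonical point $\delta_e$ satisfies $\|g\cdot\delta_e-\delta_e\|=\ell_s(g)$. A putative universal Banach space action would again produce uncountably many $1/2$-separated points, contradicting separability of the ambient Banach space.

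The only step requiring any ingenuity is producing the family $(\ell_s)_s$; once its values are confined to $\{0,1,2\}$ the triangle inequality comes for free, so I do not foresee any real obstacle. Everything else is a separability pigeonhole forced by the bounded-distortion estimate on orbit displacements.
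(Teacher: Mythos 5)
Your proposal is correct and follows essentially the same route as the paper: continuum many left-invariant metrics on $G$ indexed by $\{0,1\}^{\Nat}$, the observation that equivariance forces the base points of any equivariant isometric copies to be $1/2$-separated, a separability pigeonhole, and Lipschitz-free spaces to transfer the argument to Banach spaces. The one (harmless) simplification is that you define the length function directly on all of $G\setminus\{e\}$ with values in $\{1,2\}$, so the triangle inequality is automatic, whereas the paper defines it on a generating set and extends by word length, which requires the extra check that the extension agrees with the prescribed values on the generators.
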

\begin{proof}
Suppose the contrary, first for the metric spaces. That is, suppose there exists a separable metric space $X$ and an action $\alpha: G\curvearrowright X$ by isometries such that for any action $\beta:G\curvearrowright Y$ by isometries on a separable metric space $Y$ there is a $G$-equivariant isometric embedding of $Y$ into $X$.

Since $G$ is countably infinite we can find a sequence $(g_n)_n$ that generates $G$ and moreover, for any $n\neq m$ we have $g_n\notin \{g_m,g_m^{-1}\}$.

For any $x\in 2^\Nat$ let $\lambda'_x:\{g_n,g^{-1}_n:n\in\Nat\}\rightarrow \Nat$ be defined as follows: $$\lambda'_x(g)=\begin{cases}
1 & g\in\{g_n,g^{-1}_n\}\wedge x(n)=0,\\
2 & g\in\{g_n,g^{-1}_n\}\wedge x(n)=1.
\end{cases}$$

Finally, for any $x\in 2^\Nat$ define a length function $\lambda_x:G\rightarrow\Nat$ as follows: for any $g\in G$, set $$\lambda_x(g)=\min\{\sum_{i=1}^m \lambda'_x(h_i): g=h_1\ldots h_m, (h_i)_{i=1}^m\subseteq \{g_n,g^{-1}_n:n\in\Nat\}\}.$$
We claim that $\lambda_x$ extends $\lambda'_x$, i.e. for every $g\in \{g_n,g^{-1}_n:n\in\Nat\}$, $\lambda_x(g)=\lambda'_x(g)$. It suffices to show that for any $n$ such that $x(n)=1$ we have $\lambda_x(g_n)=2$. Suppose the contrary. Then necessarily $\lambda_x(g_n)=1$, so by definition $g_n=g_m$ or $g_n=g^{-1}_m$ for $m$ such that $x(m)=0$. However, that contradicts our assumption .\\

Now for every $x\in 2^\Nat$ take the left-invariant metric $d_x$ on $G$ induced by $\lambda_x$. The action of $G$ on itself by left translations is then an action of $G$ on $(G,d_x)$ by isometries. We claim that there is $x\in 2^\Nat$ such that there is no $G$-equivariant isometric embedding of $(G,d_x)$ into $X$. Suppose otherwise that for every $x\in 2^\Nat$ there is a $G$-equivariant isometric embedding $\iota_x$ of $(G,d_x)$ into $X$. For every $x\in 2^\Nat$ denote $\iota_x(1_G)\in X$ by $z_x$. Then for $x\neq y\in 2^\Nat$ we have $d_X(z_x,z_y)\geq 1/2$, for if $d_X(z_x,z_y)<1/2$ and $n\in\Nat$ is such that $x(n)\neq y(n)$, say $x(n)=1$, $y(n)=0$, then $$1/2>d_X(z_x,z_y)=d_X(g_n\cdot z_x,g_n\cdot z_y)\geq$$ $$|d_X(g_n\cdot z_x,z_x)-d_X(z_x,z_y)-d_X(z_y,g_n\cdot z_y)|> 1/2,$$ a contradiction. Thus we get that $\{z_x:x\in 2^\Nat\}\subseteq X$ is a $1/2$-separated uncountable set in $X$ which contradicts the separability of $X$.\\

To prove the same for the category of Banach spaces, we can for example extend the action of $G$ on $(G,d_x)$, for every $x\in 2^\Nat$, to an action of $G$ on the Lipschitz-free Banach space $F(G,d_x)$ over $(G,d_x)$ (see \cite{GK} and \cite{We} for information about Lipschitz-free Banach spaces). That is, consider a real vector space $V_G$ with $G\setminus\{1_G\}$ as the free basis, and $1_G$ as a zero. Define a norm $\|\cdot\|_x$ on $V_G$ as follows: for $v=\alpha_1 g_1+\ldots+\alpha_n g_n$ set $$\|v\|_x=\min\{\sum_{i=1}^m |\beta_i|\cdot d_x(h_i,h'_i):v=\sum_{i=1}^m \beta_i(h_i-h'_i)\}.$$  Then it is easy to check (and it is a standard fact about Lipschitz-free spaces) that for any $g,h\in G$, $\|g-h\|_x=d_x(g,h)$. $G$ acts by (affine) isometries on $(V_G,\|\cdot\|_x)$ in the following way: for $h\in G$ and $\alpha_1 g_1+\ldots+\alpha_n g_n\in V_G$ we set $h\cdot (\alpha_1 g_1+\ldots+\alpha_n g_n)=(\alpha_1 hg_1+\ldots+\alpha_n hg_n)-(\alpha_1+\ldots+\alpha_n-1)h$. It is easy to check that this gives an action of $G$ on $(V_G,\|\cdot\|_x)$ by isometries which extends the action of $G$ on itself by translation. It also extends to an action of $G$ on the completion $W_x$. Then arguing exactly the same as with the metric space one can show that it is not possible to embed in a $G$-equivariant way all the spaces $W_x$,$x\in 2^\Nat$, into a single separable Banach space with an action of $G$.
\end{proof}

Second, we explain that the universality cannot be naturally extended beyond the class of locally finite groups - at least in the case when we do not restrict the class of separable metric spaces.

\begin{prop}
Let $G$ be a countably infinite non-locally finite group. Let $\Class$ be the class of all actions of groups isomorphic to subgroups of $G$ on all separable metric spaces. Then $\Class$ does not admit a universal action.
\end{prop}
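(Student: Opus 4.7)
The plan is to adapt the argument of Theorem~\ref{no_univaction} to this setting. The main new difficulty is that the acting group of the hypothetical universal action is not fixed in advance, and the new ingredient will be a pigeonhole argument to pin it down.

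First, since $G$ is not locally finite, I would pick a finitely generated subgroup $F\leq G$ which is infinite (hence countably infinite). Because $F$ is infinite one can choose a sequence $(g_n)_{n\in\Nat}\subseteq F$ which generates $F$ and satisfies $g_n\notin\{g_m,g_m^{-1}\}$ for $n\neq m$: start with a finite generating set of $F$, prune duplicates under $g\sim g^{-1}$, and enlarge by drawing representatives one by one from distinct $\{g,g^{-1}\}$-classes of $F$, of which there are infinitely many. Exactly as in Theorem~\ref{no_univaction}, this sequence produces for every $x\in 2^\Nat$ a length function $\lambda_x$ on $F$ with $\lambda_x(g_n)=1$ if $x(n)=0$ and $\lambda_x(g_n)=2$ if $x(n)=1$, and a corresponding left-invariant metric $d_x$ on $F$. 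The action $F\curvearrowright(F,d_x)$ by left multiplication belongs to $\Class$.

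Now assume for contradiction that $\alpha:H_0\curvearrowright X\in\Class$ is universal. For every $x\in 2^\Nat$, Definition~\ref{def_univaction} yields an injective homomorphism $\psi_x:F\hookrightarrow H_0$ and an isometric embedding $\iota_x:(F,d_x)\to X$ satisfying $\iota_x(f\cdot g)=\psi_x(f)\cdot\iota_x(g)$ for all $f,g\in F$. The key observation is that $F$ is finitely generated while $H_0\leq G$ is countable, so $\psi_x$ is determined by its values on a finite generating set and there are only countably many possibilities. By pigeonhole, there exist an uncountable $S\subseteq 2^\Nat$ and a single injective homomorphism $\psi:F\hookrightarrow H_0$ such that $\psi_x=\psi$ for every $x\in S$.

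Setting $z_x=\iota_x(1_F)\in X$ for $x\in S$, equivariance gives $d_X(\psi(g_n)\cdot z_x,z_x)=\lambda_x(g_n)$. For distinct $x,y\in S$, pick $n$ with $x(n)=1$ and $y(n)=0$, so $\lambda_x(g_n)=2$ and $\lambda_y(g_n)=1$; since $\psi(g_n)\in H_0$ now acts as one and the same isometry of $X$, we have $d_X(\psi(g_n)\cdot z_x,\psi(g_n)\cdot z_y)=d_X(z_x,z_y)$, and the triangle inequality applied exactly as in Theorem~\ref{no_univaction} yields $d_X(z_x,z_y)\geq 1/2$. Hence $\{z_x:x\in S\}$ is an uncountable $1/2$-separated subset of the separable space $X$, a contradiction. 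The main obstacle is precisely the varying isomorphism $\psi_x$, which is absent in Theorem~\ref{no_univaction} because there the acting group is fixed; the non-locally-finite hypothesis on $G$ is exactly what makes the pigeonhole step succeed, by providing the finitely generated infinite $F$.
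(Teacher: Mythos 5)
Your proof is correct and follows essentially the same route as the paper: reduce to a finitely generated infinite subgroup $F\leq G$, build the continuum of left-invariant metrics $d_x$ from Theorem~\ref{no_univaction}, pigeonhole using finite generation, and derive an uncountable $1/2$-separated set. Your pigeonhole is on the embedding $\psi_x$ itself (countably many homomorphisms from a finitely generated group into a countable group) rather than, as in the paper, on the countably many subgroups of the target isomorphic to $F$; this is a minor and if anything slightly cleaner variant, since fixing $\psi$ outright is exactly what the final separation estimate needs.
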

\begin{proof}
Suppose there is such an action $\alpha: F\curvearrowright X$, where $F$ is some subgroup of $G$ and $X$ is some separable metric space. Since $G$ is not locally finite it contains a finitely generated infinite subgroup $H\leq G$. By the proof of Theorem \ref{no_univaction} there are continuum many somewhat different left-invariant metrics $(d_x)_{x\in 2^\Nat}$ on $H$. Note that $F$ contains at most countably many subgroups isomorphic to $H$. By the pigeonhole principle there is one fixed subgroup $H'\leq F$ isomorphic to $H$ and an uncountable subset $I\subseteq 2^\Nat$ such that for each $x\in I$ there is an $H'$-equivariant isometric embedding of $(H',d_x)$ into $X$. We reach a contradiction with separability of $X$ by the same argument as in the proof of Theorem \ref{no_univaction}.
\end{proof}
We have not considered any classes of actions where the class of separable metric spaces is restricted.
\begin{problem}
Find a natural class of separable metric spaces for which there are universal actions of non-locally finite groups.
\end{problem}
\section{Genericity of the action}
The objects constructed using the Fra\" iss\' e theory enjoy two interesting properties, the universality and the homogeneity. In the previous section we focused solely on the universality of the constructed action as that was the more interesting part in our point of view. We shall not explore the homogeneity here, however we want to focus on a property which often accompanies homogeneity, in fact is a homogeneity in disguise in a sense. That is the `genericity'.

Let us start with a general discussion. Fix some countable group $G$. Let $X$ be some Polish metric space, i.e. a complete separable metric space. We want to define a space of all actions of $G$ on $X$ by isometries.

We have that $\Iso(X)$, the group of all isometries on $X$ with the pointwise-convergence, or equivalently compact-open, topology is a Polish group, i.e. a completely metrizable second-countable topological group. Fixing a countable dense subset $\{x_i:i\in\Nat\}\subseteq X$ we may define a compatible complete metric $\rho$ on $\Iso(X)$ as follows: for $\phi,\psi\in\Iso(X)$ we set $$\rho(\phi,\psi)=\sum_{i=1}^\infty \frac{\min\{d_X(\phi(x_i),\psi(x_i)),1\}}{2^i}.$$
Since every action $\alpha:G\curvearrowright X$ by isometries is in unique correspondence with some homomorphism $f:G\rightarrow \Iso(X)$, we may define the space $\Act_G(X)$ of all actions of $G$ on $X$ by isometries as the space of all homomorphisms of $G$ into $\Iso(X)$. $\Act_G(X)$ is a closed subspace of the product space $\Iso(X)^G$, thus a Polish space.\\

There has been a recent research on investigating which countable groups admit generic homomorphisms into certain Polish groups. That means, fix a countable group $G$ and a Polish group $H$. Denote by $\Hom(G,H)$ the Polish space of all homomorphisms of $G$ into $H$. Note that there is a natural equivalence relation on the space $\Hom(G,H)$, that is of conjugation, where two homomorphisms $f,g:G\rightarrow H$ are conjugate if there exists an element $x\in H$ such that $f=x^{-1}g x$. Say that a homorphism $f$ is \emph{generic} if it has a comeager conjugacy class. In \cite{DoMaVa}, the author with Malicki and Valette prove that a countable group $G$ with property (T) and such that finite-dimensional representations are dense in the unitary dual $\hat G$ has a generic unitary representation. That is, a generic homomorphism into the unitary group of a separable infinite-dimensional Hilbert space equipped with the strong operator topology. This was implicitly present already in \cite{KLP} (see Theorem 2.5), from which one can also derive the converse. The existence of such a countably infinite group seems to be open though. In \cite{Ro} Rosendal proved that every finitely generated group with the Ribes-Zalesski\v i property (\cite{RiZa}), i.e. products of finitely generated subgroups are closed in the profinite topology of the group, has a generic action on the rational Urysohn space. More recently, Glasner, Kitroser and Melleray (\cite{GKM}) characterized those countable groups that have generic permutation representations, i.e. generic homomorphisms into $S_\infty$, the full permutation group of the natural numbers. See also the results about generic representations in other metric structures \cite{DoMa}.\\

We shall prove a genericity result for the universal action from Theorem \ref{main_thm}. However, it turns out that the standard equivalence relation on the space of actions is too strong. Indeed, by Melleray in \cite{Me} (also independently proved in \cite{DoMa}), the conjugacy class of every action of a countably infinite group on the Urysohn space is meager, so not generic. Thus we shall weaken the equivalence relation of being conjugate by also allowing group automorphisms. Let us state that precisely in the following definition.
\begin{defin}
Let $G$ be a countable group and $X$ a Polish metric space. Say that two homomorphisms $f,g:G\rightarrow \Iso(X)$ are \emph{weakly equivalent} if there exist an autoisometry $\phi:X\rightarrow X$ and an automorphism $\psi:G\rightarrow G$ such that for all $x\in X$ and $v\in G$ we have $$f(v)x=\phi^{-1}g(\psi(v))\phi x.$$

Moreover, we say that an element $f\in\Act_G(X)$ is \emph{weakly generic} if it has a comeager equivalence class in the weak equivalence.
\end{defin}
We shall prove the following.
\begin{thm}\label{thm_generic}
Let $G$ be the Hall's universal locally finite group. The universal action $\alpha: G\curvearrowright \Ur$ from Theorem \ref{main_thm} is weakly generic.
\end{thm}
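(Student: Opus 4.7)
The plan is to adapt the standard Fra\" iss\' e-theoretic recipe for proving genericity. Define an \emph{extension property} $(\ast)$ for $f\in\Hom(G,\Iso(\Ur))$ as follows: for every finite subgroup $F\le G$, every finite $F$-invariant subset $S\subseteq\Ur$ with rational mutual distances on which $f|_F$ acts freely with respect to some selector, and every finite pointed free extension $(H\curvearrowright T)$ of $(F\curvearrowright S)$ with rational distances, there should exist a finite subgroup $H'\le G$ containing $F$ together with an isomorphism $H\cong H'$ fixing $F$, and an isometric embedding $T\hookrightarrow\Ur$ extending $S\hookrightarrow\Ur$ equivariant with the $f$-action of $H'$. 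The universal action $\alpha$ itself satisfies $(\ast)$: this is essentially Fact~\ref{extension_fact} combined with Lemmas~\ref{rational_approx_lem} and~\ref{glue_lem} to accommodate finite rational approximations of subsets of $\Ur$ that do not sit inside the rational skeleton $X$.

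The first main step is a back-and-forth showing that any $f$ satisfying $(\ast)$ is weakly equivalent to $\alpha$. Enumerate $G=\bigcup_n F_n$ as an increasing union of finite subgroups and fix a countable dense sequence $(u_n)\subseteq\Ur$; build inductively partial isomorphisms $\psi_n:F_n\to F_n'\le G$ and partial isometries $\phi_n:X_n\to Y_n$ between finite subsets of $\Ur$ which are equivariant with respect to $f$ on the source and $\alpha$ on the target. At odd steps one applies $(\ast)$ for $f$ to absorb both the next $F_n$ and the $f$-orbit of the next $u_n$; at even steps one applies $(\ast)$ for $\alpha$ to ensure the ranges exhaust $G$ and a dense subset of $\Ur$. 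In the limit one obtains $\psi\in\mathrm{Aut}(G)$ and $\phi\in\Iso(\Ur)$ witnessing $f\sim\alpha$.

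The second main step is to show that $\mathcal G:=\{f:f\text{ satisfies }(\ast)\}$ is dense $G_\delta$ in $\Hom(G,\Iso(\Ur))$. It is $G_\delta$ because $(\ast)$ is a countable intersection (over countably many isomorphism types of finite rational extension data) of conditions which are manifestly open in the pointwise-convergence topology: once some $f$ realizes a finite extension with witnesses $(H',\psi)$, all sufficiently small perturbations of $f$ preserve the witness since only finite data are involved. For density, given a basic neighborhood $U$ of some $f_0$ specified by finite $A\subseteq G$, finite $\bar x\subseteq\Ur$ and $\varepsilon>0$, set $F_0=\langle A\rangle$, which is finite by local finiteness of $G$. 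Applying Theorem~\ref{main_thm} to the finite action of $F_0$ on $F_0\cdot\bar x$ yields a subgroup $F_0'\le G$ isomorphic to $F_0$ and an $F_0$-equivariant isometric map $F_0\cdot\bar x\to\Ur$ with respect to $\alpha|_{F_0'}$. Extend this partial isometry to some $\phi\in\Iso(\Ur)$ using ultrahomogeneity of $\Ur$, and extend the partial group isomorphism to some $\psi\in\mathrm{Aut}(G)$ using the analogous ultrahomogeneity of Hall's group. The homomorphism $f(g):=\phi^{-1}\alpha(\psi(g))\phi$ is then weakly equivalent to $\alpha$ (hence satisfies $(\ast)$), and a direct computation shows $f(g)x=f_0(g)x$ for every $g\in A$ and $x\in\bar x$, so $f\in U$.

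The most delicate point is the precise formulation of $(\ast)$: it must be strong enough that the back-and-forth can simultaneously absorb new group elements and new points of $\Ur$, which forces the property to be phrased for pointed free actions rather than merely for finite subgroups. Establishing that $\alpha$ satisfies $(\ast)$ moreover requires passing from the ambient non-free action on $\Ur$ to free approximations on a rational skeleton, which is precisely what Lemmas~\ref{rational_approx_lem} and~\ref{glue_lem} were designed for. The density argument, by comparison, is a clean transfer through the ultrahomogeneity of both the Urysohn space and Hall's group, each of which is a standard consequence of the respective Fra\" iss\' e limit construction.
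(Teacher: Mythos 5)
Your overall architecture (isolate an extension property whose satisfaction set is dense $G_\delta$, and show any two actions satisfying it are weakly equivalent by back-and-forth) is the same as the paper's. The genuine gap is the claim that your exact extension property $(\ast)$ defines a $G_\delta$ set because each instance is ``manifestly open''. It is not open: an instance of $(\ast)$ demands a witness embedding $\iota:T\hookrightarrow\Ur$ that is \emph{exactly} equivariant, i.e.\ $f(h)\,\iota(t)=\iota(h\cdot t)$ for all $h\in H'$ and $t\in T$. This is a closed condition on $f$: a small perturbation of $f$ in the pointwise-convergence topology moves $f(h)\,\iota(t)$ slightly, destroys exact equivariance on $\iota(T)\setminus\iota(S)$, and there is no reason a nearby exact witness exists for the perturbed homomorphism. (Exact realization conditions of this kind are typically meager, consistent with the paper's observation, quoting Melleray, that every conjugacy class in $\Hom(G,\Iso(\Ur))$ is meager.) A second, related defect is that your universal quantifier ranges over all finite rational-distance subsets $S\subseteq\Ur$, an uncountable family, so $(\ast)$ is not visibly a countable intersection of anything.

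The paper avoids both problems by replacing $(\ast)$ with a two-parameter approximate version, the $D$-property: for all $\varepsilon>\varepsilon'>0$, all abstract extension data $(F,\{x_i\})\leq(H,\{x_i\})$ from the \emph{countable} class $\Age$, and all tuples from the countable set $\Rat\Ur$, every $\varepsilon'$-approximate copy of $(F,\{x_i\})$ extends to an $\varepsilon$-approximate copy of $(H,\{x_i\})$. With strict inequalities each instance is open and the whole condition is $G_\delta$; the price is that the back-and-forth must be run with Cauchy-sequence bookkeeping (errors $1/2^{n}$ controlled via Lemma \ref{glue_lem}, Fact \ref{glue_fact} and Remark \ref{remark_converse}) instead of exact partial isomorphisms. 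Relatedly, your assertion that $\alpha$ itself satisfies exact $(\ast)$ needs more than a gesture at Lemmas \ref{rational_approx_lem} and \ref{glue_lem}, since these produce only approximate realizations and your $S$ need not lie in the rational skeleton $X$ where Fact \ref{extension_fact} applies. Your density argument, by contrast, is essentially correct and matches the paper's use of the homogeneity of $\Ur$ and of Hall's group.
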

Theorem \ref{thm_generic} has an interesting corollary which we state after the proof the theorem.\\

We need some notions. When $F$ and $F'$ are two isomorphic finite groups, $I$ and $I'$ two finite bijective sets and $d$, resp. $d'$ a metric on $\bigcup_{i\in I} F\cdot x_i$, resp. on $\bigcup_{i\in I'} F'\cdot y_i$, we denote by $D((F,\{x_i:i\in I\},d),(F',\{y_i:i\in I'\},d'))$, analogously as in the previous section, the supremum distance $\sup_{i,j\in I,g,h\in F} |d(g\cdot x_i,h\cdot x_j)-d'(g'\cdot y_{i'},h'\cdot y_{j'})|$, where $g',h'\in F'$ are the images of $g,h\in F$ under the given isomorphism between $F$ and $F'$ and $i',j'\in I'$ are the images of $i,j\in I$ under the given bijection between $I$ and $I'$. Such an isomorphism and a bijection will be never explicitly mentioned, it should be always clear from the context. Also, we shall often write $D((F,\{x_i:i\in I\}),(F',\{y_i:i\in I'\}))$, thus suppressing the metrics from the notation; they should also be clear from the context. The following fact follows from Lemma \ref{glue_lem}, however we will state it here since it will be used extensively.
\begin{fact}\label{glue_fact}
Suppose we are given two finite isomorphic groups $F$ and $F'$, finite bijective sets $I$ and $I'$, and metrics $d$ and $d'$ on $\bigcup_{i\in I} F\cdot x_i$, resp. on $\bigcup_{i\in I'} F'\cdot y_i$. Suppose moreover that $D((F, \{x_i:i\in I\}),(F',\{y_i:i\in I'\}))<\varepsilon$ for some $\varepsilon>0$. Then there exists a metric $\rho$ on $\bigcup_{i\in I\cup I'} F\cdot x_i$ such that
\begin{itemize}
\item $D((F,\{x_i:i\in I\},d),(F,\{x_i:i\in I\},\rho))=0$, i.e. $\rho$ extends $d$;
\item $D((F,\{x_i:i\in I'\},\rho),(F',\{y_i:i\in I'\},d'))=0$;
\item for every $i\in I$ we have $\rho(x_i,x_{i'})\leq \varepsilon$.

\end{itemize} 
\end{fact}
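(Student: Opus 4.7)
The plan is to follow essentially the same weighted-graph construction that was used in the proof of Lemma~\ref{glue_lem}, adapted to the present setting where we glue two isomorphic copies rather than a subsystem into a supersystem. First, I would use the given isomorphism $F\cong F'$ and the bijection $I\leftrightarrow I'$ (pairing each $i\in I$ with some $i'\in I'$) to transport the $F'$-action on $\bigcup_{i'\in I'} F'\cdot y_{i'}$ into an $F$-action on a formal disjoint copy $\bigcup_{i'\in I'} F\cdot x_{i'}$, so that both actions become $F$-actions on subsets of $Z=\bigcup_{i\in I\cup I'} F\cdot x_i$.

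Next, I would put on $Z$ the weighted-graph structure with three types of edges: (a) within $\bigcup_{i\in I} F\cdot x_i$ an edge $g\cdot x_i \sim h\cdot x_j$ (for $i,j\in I$) of weight $d(g\cdot x_i,h\cdot x_j)$; (b) within $\bigcup_{i'\in I'} F\cdot x_{i'}$ the analogous edges of weight given by the transported $d'$; and (c) partnership edges between $g\cdot x_i$ and $g\cdot x_{i'}$ for each pair $i\leftrightarrow i'$ and each $g\in F$, of weight exactly $\varepsilon$. I then define $\rho$ as the induced graph metric; connectedness is ensured by (c). Since each family of edges is invariant under the diagonal $F$-action, $F$ acts on $(Z,\rho)$ by isometries, and the third bullet $\rho(x_i,x_{i'})\le\varepsilon$ is immediate from the existence of a partnership edge of this weight.

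The substantive content is the verification that $\rho$ extends $d$ on the $I$-part (the $I'$-part is symmetric). One inequality is clear, so I would suppose for contradiction that some path $e_1\ldots e_n$ from $g\cdot x_i$ to $h\cdot x_j$ (with $g,h\in F$, $i,j\in I$) has total weight strictly less than $d(g\cdot x_i,h\cdot x_j)$, and argue by induction on $n$; the case $n=1$ is trivial. By contracting any two consecutive edges both lying in the $I$-part (resp.\ $I'$-part) via the triangle inequality in $(X,d)$ (resp.\ in the transported $(Y,d')$), I may assume the path alternates between the two copies, so every excursion into the $I'$-part is bracketed by two partnership edges. If the path contains a strict sub-excursion, i.e.\ an ``in-and-out'' of length $l<n$ ending back at an $I$-point, the inductive hypothesis lets me replace it by a single direct $I$-edge, strictly shortening the path and reducing to the inductive hypothesis. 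Otherwise the path has the form $g\cdot x_i \to g\cdot x_{i'}\to \cdots \to h\cdot x_{j'}\to h\cdot x_j$, whose middle is a path of length $n-2$ within the $I'$-part; by induction its weight is at least $d'(g\cdot x_{i'},h\cdot x_{j'})$, which by the hypothesis $D<\varepsilon$ is at least $d(g\cdot x_i,h\cdot x_j)-\varepsilon$. The total weight is then at least $\varepsilon + (d(g\cdot x_i,h\cdot x_j)-\varepsilon) + \varepsilon = d(g\cdot x_i,h\cdot x_j)+\varepsilon$, contradicting the strict inequality.

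The main obstacle, as usual in these weighted-graph arguments, is the bookkeeping of the induction when a putative short path may shuttle between the two copies several times; this is settled by the same contraction trick already used in the proof of Lemma~\ref{glue_lem}, and the closeness hypothesis $D<\varepsilon$ ensures that every round-trip through the $I'$-part is paid for by at least $2\varepsilon$ of partnership weight, which more than compensates for the $\varepsilon$-gap between $d$ and the transported $d'$.
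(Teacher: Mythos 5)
Your proof is correct and uses exactly the weighted-graph gluing technique of the paper; note, though, that the paper gives no separate argument for this Fact but simply observes it is the special case of Lemma \ref{glue_lem} obtained by taking $H_1=H_2=F$ and $I=J$ after transporting the $F'$-system along the given isomorphism and bijection. Your direct verification (contracting consecutive same-part edges, splitting at any intermediate $I$-vertex, and noting that a full round trip through the $I'$-part costs $2\varepsilon$ in partnership edges while the gap between $d$ and the transported $d'$ is less than $\varepsilon$) is a sound self-contained substitute for that citation.
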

\begin{remark}\label{remark_converse}
Conversely, suppose that a finite group $F$ acts freely on some metric space $Y$ and let $\{y_i:i\in I\}$ and $\{z_i:i\in I\}$ be two finite subsets of $Y$ indexed by the same set such that for every $i\in I$, $d_Y(y_i,z_i)<\varepsilon$. Then $D((F,\{y_i:i\in I\}),(F,\{z_i:i\in I\}))<2\varepsilon$.
\end{remark}

We shall now define a subset of $\Act_G(\Ur)$. By $\Rat\Ur$ we denote the rational Urysohn space, a countable dense subset of $\Ur$. We shall denote the pointed free rational actions of finite groups by $(F,\{x_i:\in I\})$ and we write $(F,\{x_i:i\in I\})\leq (H,\{x_i:i\in I'\})$ to denote that the former actions embeds into the latter. To simplify the notation, we always assume in such a case that $F\leq H$ and $I\subseteq I'$. Recall that the class $\Age$ of all pointed free rational actions by finite groups is countable.

By $\mathbb{D}$ we denote the subset of $\Act_G(\Ur)$ of all actions $G\curvearrowright \Ur$ satisfying:\\

for all $\varepsilon>\varepsilon'>0$, for all $(F,\{x_i:i\in I\})\leq (H,\{x_i:i\in I'\})\in\Age$ and for every subgroup $F'\leq G$ isomorphic to $F$ and all $\{u_i:i\in I\}\subseteq \Rat\Ur$ such that
 $$D((F,\{x_i:i\in I\}),(F',\{u_i:i\in I\})<\varepsilon'$$ there exist a subgroup $F'\leq H'\leq G$ isomorphic to $H$, and points $\{u_i:i\in I'\}\subseteq \Rat\Ur$ such that $$D((H,\{x_i:i\in I'\}),(H',\{u_i:i\in I'\}))<\varepsilon.$$

We shall refer to the property above as $D$-property. A simple computation shows that the $D$-property is a $G_\delta$ condition, i.e. $\mathbb{D}$ is a $G_\delta$ set. It is non-empty since the universal action $\alpha: G\curvearrowright \Ur$ from Theorem \ref{main_thm} clearly belongs to $\mathbb{D}$. We claim that its weak equivalence class is dense. Indeed, fix an open neighborhood of some action $\beta: G\curvearrowright \Ur$ which is given by finitely many group elements, finitely many elements from $\Ur$ and some $\varepsilon>0$. Without loss of generality, we may assume that these group elements form a finite subgroup $F\leq G$ and these finitely many elements from $\Ur$ form a finite subset $A\subseteq\Ur$ invariant under the subaction of this finite group. We may view this subaction as a pointed free action on a finite pseudometric space $(Y,p)$. Let $\tau:Y\rightarrow A$ be the corresponding surjection (which is not injective unless the action of $F$ on $A$ is free). By Lemma \ref{rational_approx_lem}, we may approximate this pseudometric by a rational metric $r$ such that the action is still by isometries and $D((F,Y,p),(F,Y,r))<\varepsilon/2$. This finite action $F\curvearrowright Y$ (or its equivalence class) on a rational metric space belongs to the Fra\" iss\' e class of all pointed free actions of finite groups on finite rational metric spaces. Therefore, there is a subaction $F'\curvearrowright X_n$ of the universal action $\alpha$ that is equivalent to $F\curvearrowright Y$. That is, $F$ is isomorphic to $F'$, and between $Y$ and $X_n$ there is an equivariant isometry. Moreover, by the density of $\Rat\Ur$ in $\Ur$ and the extension property of $\Rat\Ur$ we may realize $Y$ as a subset of $\Rat\Ur\subseteq \Ur$ so that for every $y\in Y$ we have $d_\Ur(y,\tau(y))<\varepsilon/2$, where this last inequality is possible because of our assumption $D((F,Y,p),(F,Y,r))<\varepsilon/2$. Since $Y$ and $X_n$ are isometric, by the homogeneity of $\Ur$ there exists an autoisometry $\psi:\Ur\rightarrow\Ur$ such that $\psi(X_n)=Y$. Conjugating the action $\alpha$ with $\psi$ gives us an action $\alpha'$ where $F'$ acts on $Y$ in the same way as $F$ acts on $Y$. Since $F'$ and $F$ are isomorphic, by the homogeneity of the Hall's group $G$ there exists an automorphism $\phi:G\rightarrow G$ with $\phi(F)=F'$. `Shifting' the action $\alpha'$ by $\phi$, gives us a weakly equivalent action $\alpha''$, i.e. $\alpha''(g,x)=\alpha'(\phi(g),x)$. The action $\alpha''$ is then easily checked to be in the given neighborhood of $\beta$.

Now we need to show that any two actions from $\mathbb{D}$ are weakly equivalent. That is, for actions $\alpha,\beta\in\mathbb{D}$ we need to find an automorphism $\phi:G\rightarrow G$ and an autoisometry of $\psi:\Ur\rightarrow\Ur$ such that for all $g\in G$ and $x\in\Ur$ we have $$\alpha(g,x)=\beta(\phi(g),\psi(x)).$$

We now fix two actions $\alpha,\beta\in\mathbb{D}$ and show that. Let $(z_n)_{n\in\Nat}$ be some enumeration of $\Rat\Ur$ such that for each $i_0\in\Nat$ both sets $\{z_i:i\geq i_0,i\text{ is odd}\}$ and $\{z_i:i\geq i_0,i\text{ is even}\}$ are dense in $\Ur$. Also, write $G$ as an increasing union $G_1\leq G_2\leq G_3\leq\ldots$ of finite subgroups.

By induction, we shall find for each $n\in\Nat$:
\begin{itemize}
\item an increasing sequence of finite groups $H_1\leq\ldots\leq H_n\leq G$ and $H'_1\leq\ldots\leq H'_n$ such that for each $i\leq n$, $H_i$ and $H'_i$ are isomorphic by some $\phi_i$ and $\phi_i\supseteq \phi_{i-1}$, and for every odd $i\leq n$ we have that $G_i\leq H_i$, and for every even $i\leq n$ we have that $G_i\leq H'_i$;
\item for each $i\leq n$, sequences $(u_i^j)_{j=i}^n\subseteq \Rat\Ur$ and $(v_i^j)_{j=i}^n\subseteq \Rat\Ur$ such that
\begin{itemize}
\item for each $i\leq n$ and $i\leq j<k\leq n$, $d(u_i^j,u_i^k)\leq 1/2^{j+1}$ and  $d(v_i^j,v_i^k)\leq 1/2^{j+1}$,
\item for every odd $i\leq n$, $u_i^i=z_i$, and for every even $i\leq n$, $v_i^i=z_i$;
\end{itemize}
\item $D((H_n,\{u_i^n:i\leq n\}),(H'_n,\{v_i^n:i\leq n\}))<1/2^{n+1}$.

\end{itemize}

Once the induction is finished, we have that $G=\bigcup_n H_n=\bigcup_n H'_n$, i.e $\phi=\bigcup_n \phi_n:G\rightarrow G$ is an isomorphism.  Also we have that for every $i\leq n$ the sequences $(u_i^n)_n$ and $(v_i^n)_n$ are Cauchy in $\Rat\Ur$, thus they have some limit $u_i\in\Ur$, resp. $v_i\in\Ur$. It follows from the inductive assumption that both $\{u_i:i\in\Nat\}$ and $\{v_i:i\in\Nat\}$ are dense in $\Ur$ and that the map sending $u_i$ to $v_i$ is an isometry which extends to an autoisometry $\psi$ of $\Ur$. By the limit argument we get that the actions $\alpha$ and $\beta$ are weakly equivalent witnessed by $\phi$ and $\psi$. Thus we need to describe the inductive steps to finish the proof.\\

\noindent {\bf The first and second step of the induction.} Set $H_1=G_1$, $u_1^1=z_1$. By Lemma \ref{rational_approx_lem} there exists $(H_1, \{x_1\})\in\Age$ such that $D((H_1,\{u_1^1\}),(H_1,\{x_1\})<1/2$. Since $\beta$ satisfies the $D$-property, there is a subgroup $H'_1\leq G$ isomorphic to $H_1$ (via some $\phi_1$) and some $v_1^1$ which, because of Remark \ref{remark_converse} we may find in $\Rat\Ur$, such that\\ $D((H_1,\{x_1\}),(H'_1,\{v_1^1\})<1/2$, thus by triangle inequality\\ $D((H_1,\{u_1^1\}),(H'_1,\{v_1^1\}))<1$. That finishes the first step of the induction.

Next, set $v_1^2=v_1^1$ and $v_2^2=z_2$. Let $H'_2\leq G$ be an arbitrary finite group containing both $H'_1$ and $G_2$, e.g. the subgroup generated by these two groups. Again by Lemma \ref{rational_approx_lem} there exists $(H'_2,\{x_1,x_2\})\in\Age$ such that $D((H'_2,\{v_1^2,v_2^2\}),(H'_2,\{x_1,x_2\})<1/4$. By the $D$-property of $\alpha$, Fact \ref{glue_fact} and also Remark \ref{remark_converse} we can find $H_1\leq H_2\leq G$ isomorphic to $H'_2$ (via some $\phi_2$ extending $\phi_1$) and $u_1^2,u_2^2\in\Rat\Ur$ such that $d(u_1^1,u_1^2)<1/2$ and $D((H_2,\{u_1^2,u_2^2\}),(H'_2,\{v_1^2,v_2^2\}))<1/2$. This finishes the second step of the induction.\\

\noindent {\bf The general odd and even step of the induction.} The general steps are treated analogously as the second step of the induction. So we only briefly show the general odd $n$-th step of the induction, i.e. $n$ is now odd greater than $2$. For $i<n$ we set $u_i^n=u_i^{n-1}$ and we set $u_n^n=z_n$. Let $H_n\leq G$ be an arbitrary finite subgroup containing both $H_{n-1}$ and $G_n$. By Lemma \ref{rational_approx_lem} there exists $(H_n,\{x_1,\ldots,x_n\})\in\Age$ such that $D((H_n,\{u_1^n,\ldots,u_n^n\}),(H_n,\{x_1,\ldots,x_n\})<1/2^n$. By the $D$-property of $\beta$, Fact \ref{glue_fact} and also Remark \ref{remark_converse} we can find $H'_{n-1}\leq H'_n\leq G$ isomorphic to $H_n$ (via some $\phi_n$ extending $\phi_{n-1}$) and $v_1^n,\ldots,v_n^n\in\Rat\Ur$ such that $d(v_i^{n-1},v_i^n)<1/2^{n-1}$, for all $i<n$, and\\ $D((H_n,\{u_1^n,\ldots,u_n^n\}),(H'_n,\{v_1^n,\ldots,v_n^n\}))<1/2^{n-1}$. That finishes the inductive construction and the whole proof of Theorem \ref{thm_generic}.\\

In \cite{MeTs}, Melleray and Tsankov ask whether there is a Polish group $\mathbb{H}$ and a countable abelian group $G$ such that for a generic element $\alpha\in\Hom(G,\Iso(\Ur))$ the closure of $\alpha[G]$ in $\Iso(\Ur)$ is topologically isomorphic to $\mathbb{H}$. The most interesting case is when $G=\Int$, however the results from \cite{MeTs} suggest that the choice of $G$ is not important. That means, it seems likely that if the result holds for an unbounded countable abelian group $G$, then it holds for $\Int$ as well.

We show that the Melleray-Tsankov's problem has an affirmative answer in the non-abelian case when $G$ is the Hall's group.
\begin{prop}\label{prop_isom}
Let $H$ be a countable group and $X$ a Polish metric space. Suppose that $\beta,\gamma\in \Hom(H,\Iso(X))$ are weakly equivalent. Then the topological groups $\overline{\beta[H]}$ and $\overline{\gamma[H]}$ are topologically isomorphic.
\end{prop}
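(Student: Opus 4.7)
The plan is to translate the weak equivalence directly into a conjugation relation between the images $\beta[H]$ and $\gamma[H]$ inside $\Iso(X)$, and then invoke the fact that conjugation in any topological group is a topological group automorphism.

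First I would unpack the definition of weak equivalence. The identity $\beta(v)x = \phi^{-1}\gamma(\psi(v))\phi x$ holding for every $x \in X$ is literally the equality $\beta(v) = \phi^{-1}\gamma(\psi(v))\phi$ in $\Iso(X)$. Now, as $v$ ranges over $H$, the element $\psi(v)$ also ranges over all of $H$ because $\psi$ is a \emph{group automorphism} of $H$; this is the place where it is crucial that we allow an automorphism in the definition rather than merely a homomorphism. Consequently
\[
\beta[H] \;=\; \{\phi^{-1}\gamma(\psi(v))\phi : v \in H\} \;=\; \phi^{-1}\gamma[H]\phi,
\]
so the two image subgroups of $\Iso(X)$ are literally conjugate by the fixed element $\phi$.

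Next I would use that $\Iso(X)$, equipped with the pointwise-convergence topology, is a Polish topological group, so the inner automorphism $c_\phi \colon f \mapsto \phi^{-1} f \phi$ is a homeomorphism of $\Iso(X)$ that simultaneously is an abstract group automorphism, with continuous inverse $c_{\phi^{-1}}$. Being a homeomorphism it commutes with closure, and being a group automorphism it carries subgroups to subgroups; hence
\[
c_\phi\bigl(\overline{\gamma[H]}\bigr) \;=\; \overline{c_\phi(\gamma[H])} \;=\; \overline{\beta[H]}.
\]
The restriction of $c_\phi$ to $\overline{\gamma[H]}$ is therefore a continuous group isomorphism onto $\overline{\beta[H]}$, and its inverse is the restriction of $c_{\phi^{-1}}$, which is again continuous. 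This yields the desired topological group isomorphism between $\overline{\gamma[H]}$ and $\overline{\beta[H]}$.

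There is no real obstacle here: the entire content of the proposition is the observation that, because the group automorphism $\psi$ permutes $H$ bijectively, the factor $\psi$ drops out at the level of images and leaves only ordinary conjugation by $\phi$. Once that is noted, everything reduces to the standard fact that conjugation in a topological group is a topological isomorphism between conjugate subgroups and their closures.
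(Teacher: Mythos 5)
Your proof is correct. It is worth noting how it relates to the paper's own argument: the paper first passes to the kernels $N_\beta$, $N_\gamma$ of the two homomorphisms, observes that the group automorphism $\psi$ carries one kernel to the other and hence induces an isomorphism of the quotients $H/N_\gamma\to H/N_\beta$ (identified with the countable dense subgroups $\gamma[H]$ and $\beta[H]$ of the respective closures), and then verifies by an explicit chase of basic pointwise-convergence neighborhoods --- pulling the defining points back through $\psi^{-1}$ of the autoisometry --- that this map is a homeomorphism, so that it extends to the closures. You reach the same isomorphism (it is conjugation by $\phi$ in both treatments) but package it more efficiently: by noting that surjectivity of the automorphism $\psi$ makes $\beta[H]=\phi^{-1}\gamma[H]\phi$ as subsets of $\Iso(X)$, you reduce everything to the standard facts that an inner automorphism of a topological group is a homeomorphism, hence commutes with closure and restricts to a topological isomorphism between a subgroup's closure and its conjugate's closure. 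This sidesteps the kernel/quotient bookkeeping and the hand verification of continuity entirely; what the paper's route makes slightly more explicit is the concrete description of the induced map on the dense countable subgroups, but nothing in the statement requires that. Both arguments use the full strength of the definition in the same two places: that $\psi$ is an automorphism (so the images coincide after conjugation) and that $\phi$ is an autoisometry, i.e.\ an element of the topological group $\Iso(X)$.
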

\begin{proof}
Let $(\phi,\psi)$ be a pair of a group automorphism and an autoisometry that witnesses that $\beta$ and $\gamma$ are weakly equivalent. Let $N_\beta$, resp. $N_\gamma$ be the kernels of $\beta$, resp. $\gamma$. It is easy to see that $\phi[N_\gamma]=N_\beta$, therefore $\phi$ induces an automorphism $\phi':H/N_\gamma\rightarrow H/N_\beta$. $H/N_\gamma$, resp. $H/N_\beta$ are countable dense subgroups of $\overline{\gamma[H]}$, resp. of $\overline{\beta[H]}$. We claim that $\phi'$ induces also a topological group isomorphism $\phi'':\overline{\gamma[H]}\rightarrow \overline{\beta[H]}$. It suffices to show that $\phi'$ is a homeomorphism between $H/N_\gamma$ and $H/N_\beta$ with the topologies inherited from $\Iso(X)$. We show that it is continuous. Showing that the inverse is continuous is analogous. Fix some $h\in H\setminus N_\gamma$ and some neighborhood $U$ of $[\phi(h)]_{N_\beta}$ which is given by some $x_1,\ldots,x_n\in X$ and $\varepsilon>0$ (viewing the equivalence class $[\phi(h)]_{N_\beta}$ as an isometry of $X$). However, then the neighborhood $V$ of $h$ (or $[h]_{N_\gamma}$) given by $\psi^{-1}(x_1),\ldots,\psi^{-1}(x_n)$ and $\varepsilon$ is such that $\phi'[V]=U$. That follows immediately from the definition of weak equivalence.
\end{proof}
The following is now an immediate consequence of Theorem \ref{thm_generic} and Proposition \ref{prop_isom}.
\begin{cor}
Let $G$ be the Hall's group. There exists a Polish group $\mathbb{H}$ such that for comeager many $\alpha\in\Hom(G,\Iso(\Ur))$ we have $\overline{\alpha[G]}\cong \mathbb{H}$.
\end{cor}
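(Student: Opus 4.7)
The plan is to combine the two results the corollary is said to follow from in the only natural way. By Theorem \ref{thm_generic}, the weak equivalence class of the universal action $\alpha:G\curvearrowright \Ur$ from Theorem \ref{main_thm} is comeager in $\Hom(G,\Iso(\Ur))$. By Proposition \ref{prop_isom}, any element $\beta\in\Hom(G,\Iso(\Ur))$ weakly equivalent to $\alpha$ satisfies $\overline{\beta[G]}\cong\overline{\alpha[G]}$ as topological groups. Setting $\mathbb{H}:=\overline{\alpha[G]}$ then yields the conclusion.

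More explicitly, first I would define $\mathbb{H}$ to be the closure in $\Iso(\Ur)$ of the image of the universal action of Theorem \ref{main_thm}. Being a closed subgroup of the Polish group $\Iso(\Ur)$, it is itself a Polish group, so $\mathbb{H}$ is indeed a legitimate choice. Next, let $C\subseteq\Hom(G,\Iso(\Ur))$ be the weak equivalence class of $\alpha$. Theorem \ref{thm_generic} asserts that $C$ is comeager. Finally, for every $\beta\in C$, Proposition \ref{prop_isom} applied to the pair $(\alpha,\beta)$ produces a topological group isomorphism between $\overline{\beta[G]}$ and $\overline{\alpha[G]}=\mathbb{H}$. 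Since this conclusion holds for every $\beta$ in the comeager set $C$, it holds for comeagerly many $\alpha'\in\Hom(G,\Iso(\Ur))$, which is exactly the statement of the corollary.

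There is no real obstacle here; the corollary is a one-line deduction once one has the two ingredients in hand. The only point worth double-checking is that Proposition \ref{prop_isom} was stated for any countable group $H$ (not just Hall's group), so its application is immediate, and that the comeager set $C$ from Theorem \ref{thm_generic} is exactly a single weak equivalence class (rather than, say, a larger set), so that the uniform isomorphism type $\mathbb{H}$ is well defined on it. Both are explicit in the preceding results.
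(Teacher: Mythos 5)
Your proof is correct and is exactly the deduction the paper intends: the paper itself presents the corollary as an immediate consequence of Theorem \ref{thm_generic} (comeagerness of the weak equivalence class of the universal action) and Proposition \ref{prop_isom} (invariance of the topological isomorphism type of $\overline{\alpha[G]}$ under weak equivalence), with $\mathbb{H}=\overline{\alpha[G]}$ for the universal $\alpha$. Nothing further is needed.
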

\begin{question}
What is $\mathbb{H}$ from the corollary?
\end{question}
Melleray and Tsankov pointed out to us that most likely $\mathbb{H}$ is going to be the full isometry group $\Iso(\Ur)$.

\section{Open problems about universal actions on Banach spaces}
In the last section we discuss universal actions by isometries on Banach spaces.

Recall that by the theorem of Mazur and Ulam (see e.g. Theorem 14.1 in \cite{BeLi}) every (onto) isometry on a Banach space is affine, that means it is a linear isometry plus translation. From that, one can derive that every action $\alpha: G\curvearrowright X$ of some group $G$ on a Banach space $X$ by isometries is determined by an action $\alpha_0:G\curvearrowright X$, which is by \emph{linear} isometries, and by a \emph{cocycle map} $b:G\rightarrow X$ which determines the corresponding translates. That is, for any $g\in G$ and $x\in X$ we have $$\alpha(g)x=\alpha_0(g)x+b(g).$$ Conversely, whenever we have an action $\alpha_0: G\curvearrowright X$ by linear isometries and a map $b:G\rightarrow X$ satisfying the so-called `cocycle condition', i.e. for every $g,h\in G$, we have $$b(gh)=\alpha_0(g)b(h)+b(g),$$ we can get an action of $G$ on $X$ by affine isometries. We refer the reader to Chapter 6 of \cite{NoYu} for more information.

This implies that the natural notion of embedding between two actions of groups on Banach spaces involves a group monomorphism and an equivariant affine isometric embedding. That motivates the following question.
\begin{question}
Does there exist an action of a countable locally finite group $G$ on a separable Banach space $X$ by affine isometries such that for every action of a locally finite countable group $H$ on a separable Banach space $Y$ by isometries there is a subgroup $H'\leq G$, isomorphic to $H$, and an affine isometric embedding $\phi:Y\hookrightarrow X$ that is, after identifying $H$ and $H'$, $H$-equivariant?
\end{question}

If one considers a less natural type of embeddings between actions of groups on Banach spaces by isometries, namely just equivariant isometric embeddings (thus treating Banach spaces just as metric spaces), the universality problem has a positive answer.

Let $F$ be the functor which sends a pointed metric space $(X,0)$ to its Lipschitz-free Banach space $F(X)$. By functoriality, any autoisometry of $X$ which preserves $0$ extends to a linear autoisometry of $F(X)$. However, even every autoisometry of $X$ extends to an affine autoisometry of $F(X)$. Indeed, let $\phi:X\rightarrow X$ be some autoisometry. In what follows, we view $X$ as a metric subspace of $F(X)$, i.e. we view every point $x\in X$ as a point in $F(X)$ also. Then the map $x\to \phi(x)-\phi(0)$ from $X$ to $F(X)$ is an isometric embedding of $X$ into $F(X)$ which preserves $0$, thus extends to a linear isometric embedding from $F(X)$ into $F(X)$. It is easy to check that it is actually onto. Composing it with the translation `$+\phi(0)$' gives the affine autoisometry that extends $\phi$.

Moreover, it is easy to check that by the same method every action of a group $G$ on a pointed metric space $(X,0)$ by isometries extends to the action of $G$ on $F(X)$ by affine isometries; i.e. the cocycle condition is satisfied. Thus from Theorem \ref{main_thm} we get the following corollary. We refer to Chapter 5 in \cite{Pe} for information about the Holmes space.
\begin{cor}
There exists an action of the Hall's group $G$ on the Holmes space $F(\Ur)$, the Lipschitz-free Banach space over the Urysohn space, such that for any action of a countable locally finite group $H$ on a separable Banach space $Y$ by isometries there exists $H'\leq G$ isomorphic to $H$ and an isometric embedding $Y$ into $F(\Ur)$ which is $H$-equivariant, after identifying $H$ and $H'$.
\end{cor}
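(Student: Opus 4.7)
The plan is to extract the universal action $\alpha: G \curvearrowright \Ur$ from Theorem \ref{main_thm}, apply the Lipschitz-free functor machinery recalled in the paragraph above the corollary to lift $\alpha$ to an action $\tilde\alpha: G \curvearrowright F(\Ur)$ by affine isometries, and then observe that the universality for actions on separable metric spaces (already established for $\Ur$) transfers to $F(\Ur)$ because every separable Banach space with an $H$-action by isometries is, in particular, a separable metric space with an $H$-action by isometries.

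More concretely, here is the order of steps I would follow. First, fix an arbitrary basepoint $0 \in \Ur$ and, by the recipe described in the paragraph preceding the corollary, extend $\alpha$ to an action $\tilde\alpha$ of $G$ on $F(\Ur)$ by affine isometries; explicitly, for $g \in G$ one defines $\tilde\alpha(g)$ to be the linear extension of $x \mapsto \alpha(g)x - \alpha(g)0$ followed by translation by $\alpha(g)0$, and the cocycle identity was already noted to hold. Second, given an action $\beta: H \curvearrowright Y$ of a countable locally finite group on a separable Banach space $Y$ by isometries, forget the linear structure on $Y$ and view it purely as a separable metric space on which $H$ acts by isometries. Third, apply Theorem \ref{main_thm} to obtain a subgroup $H' \leq G$, an isomorphism between $H$ and $H'$, and an $H$-equivariant isometric embedding $\iota: Y \hookrightarrow \Ur$. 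Fourth, compose $\iota$ with the canonical isometric inclusion $j: \Ur \hookrightarrow F(\Ur)$ to obtain an isometric embedding $j \circ \iota: Y \hookrightarrow F(\Ur)$.

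The only point requiring verification is that $j \circ \iota$ is $H'$-equivariant when $F(\Ur)$ carries the affine action $\tilde\alpha$. This is immediate from the construction of $\tilde\alpha$: for any isometry $\phi$ of $\Ur$ and any point $x \in \Ur \subseteq F(\Ur)$, the extension $\tilde\phi$ satisfies $\tilde\phi(x) = (\phi(x) - \phi(0)) + \phi(0) = \phi(x)$, so $j$ intertwines the $G$-action on $\Ur$ with the restriction of $\tilde\alpha$ to the metric subspace $\Ur \subseteq F(\Ur)$. Combined with the equivariance of $\iota$, this yields the desired equivariance of $j \circ \iota$.

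There is no real obstacle here: the construction is a straightforward composition of Theorem \ref{main_thm} with the Lipschitz-free extension procedure described before the corollary. The only subtlety worth flagging is that the corollary speaks of isometric (not affine-isometric) embeddings of $Y$ into $F(\Ur)$; this is essential, because in general $\iota$ need not respect any affine structure on $Y$ and indeed $Y$ need not be mapped to an affine subspace of $F(\Ur)$. In other words, the corollary is genuinely a statement about $F(\Ur)$ as a \emph{metric} target equipped with a $G$-action by affine isometries, which is exactly what the Lipschitz-free extension of $\alpha$ provides.
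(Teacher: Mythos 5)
Your proposal is correct and follows exactly the paper's route: lift the universal action $\alpha\colon G\curvearrowright\Ur$ of Theorem \ref{main_thm} to an affine-isometric action on $F(\Ur)$ via the Lipschitz-free extension described just before the corollary, then treat a Banach space $Y$ with an $H$-action merely as a metric space, embed it equivariantly into $\Ur$, and compose with the canonical inclusion $\Ur\hookrightarrow F(\Ur)$. The equivariance check you flag is the right (and only) point needing verification, and your computation settles it.
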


Finally, one can consider actions of groups on Banach spaces by linear isometries; in other words, representations of groups in linear isometry groups of Banach spaces.

Below we propose a possible way how to prove an analogue of Theorem \ref{intro_mainthm} for such representations of locally finite groups by similar methods, again using Fra\" iss\' e theory. The proofs seem to be much more technical than in the case of plain metric spaces without algebraic structure. We suggest a class of actions of finite groups on finite-dimensional Banach spaces by isometries. We do not have a full proof of the amalgamation property for this class. However, provided it does exist it follows there is a universal representation of the Hall's group in the Gurarij space.\\

Let $F$ be a finite group and $I$ a non-empty finite set. By $F_I$ we denote the finite set $F\times I=\{x_{g,i}:g\in F,i\in I\}$. Instead of $x_{0,i}$, where $i\in I$ and $0\in F$ is the group zero, we may just write $x_i$. Consider now a finite-dimensional real vector space $E_{F,I}$ with $F_I$ as a basis. The canonical action of $F$ on $F_I$, where $g\cdot x_{f,i}=x_{gf,i}$ (or the permutation representation of $F$ on $F_I$), extends to a linear action of $F$ on $E_{F,I}$ (resp. the representation of $F$ in $\GL(E_{F,I})$).

Now let $W\subseteq E_{F,I}$ be any finite subset satisfying:
\begin{itemize}
\item $0\in W$; if $w\in W$, then $-w\in W$;
\item for every $i\neq j\in I$, $x_i-x_j\in W$;
\item for every $i\in I$, $g\in F$, $x_{g,i}\in W$;
\item for any $g\in F$ and $w\in W$, $g\cdot w\in W$.

\end{itemize}

A partial $F$-norm $\Norm'$ on $W$ is a partial norm on the finite set $W$ compatible with the action of $F$; that is, a function satisfying

\begin{itemize}
\item $\|w\|'=0$ iff $w=0$; (positivity);
\item $\|\alpha w\|'=|\alpha|\|w\|'$ provided that $w,\alpha w\in W$, for $\alpha\in \Rea$; (homogeneity)
\item $\|w\|'\leq \sum_{i=1}^n |\alpha_i|\|w_i\|'$, where $w=\sum_{i=1}^n \alpha_i w_i$ $w,(w_i)_{i=1}^n\subseteq W$, $(\alpha_i)_{i=1}^n\subseteq \Rea$; (triangle inequality)
\item $\|w\|'=\|g\cdot w\|'$, for $g\in F$, $w\in W$. (compatibility with the action)

\end{itemize}

Having $\Norm'$ we define a norm $\Norm$ on $E_{F,I}$ as the maximal extension of $\Norm'$ to the whole $E_{F,I}$. That is, for any $x\in E_{F,I}$ we set $$\|x\|=\min\{\sum_{j=1}^n |\beta_j|\|w_j\|':x=\sum_{j=1}^n \beta_j w_j,(w_j)_{i\leq n}\subseteq W\}.$$ It follows by compactness (from the finite-dimensionality) that the minimum is indeed attained.

Now it is straightforward to check that $\Norm$ is a norm that extends $\Norm'$ and that the action of $F$ on $E_{F,I}$ with $\Norm$ is by linear isometries. Notice that there are several other equivalent ways how to define $\Norm$ using $\Norm'$. For instance one can take the closed convex hull of the set $\{w/\|w\|':w\in W\}$ and then consider the Minkowski functional of such a set. The resulting norm will be $\Norm$. Another way is to consider the following set of functions $\mathcal{F}=\{f:F_I\cup\{0\}\rightarrow \Rea:f(0)=0,|\tilde{f}(w)|\leq \|w\|'\}$, where $\tilde{f}$ is the unique linear extension of $f$ on $E_{F,I}$. Then we have, for every $x\in E_{F,I}$, $\|x\|=\sup_{f\in\mathcal{F}} |\tilde{f}(x)|$.\\

We shall call such an action of $F$ on such a finite-dimensional space \emph{finitely presented}. If the partial norm $\Norm'$ is defined only on linear combinations of basis vectors with rational coefficients and it has a rational range, we shall call such a finitely presented action \emph{rational}.

Let us have finite-dimensional spaces $E_{F,I}$ and $E_{H,J}$, where $F,H$ are finite groups and $I,J$ finite sets. Suppose there are embeddings $\phi:F\hookrightarrow H$ and $\psi:I\hookrightarrow J$. Then they naturally induce a linear embedding of $E_{F,I}$ into $E_{H,I}$ which is also, after identifying $F$ and $\psi[F]\leq H$, $F$-equivariant. If $E_{F,I}$, resp. $E_{H,J}$ are equipped with the finitely presented norm, invariant by the action, and the linear embedding given by $\phi$ and $\psi$ is also isometric, we call such a pair $(\phi,\psi)$ an embedding between two finitely presented actions.

\begin{question}
Does the class of all finitely presented rational actions have the amalgamation property?
\end{question}

If the answer is affirmative, the Fra\" iss\' e limit would be an action of the Hall's group $G$ on a normed vector space $E_{G,J}$ with the corresponding extension property such that $\mathbb{G}$, the completion of $E_{G,J}$, is isometric to the Gurarij space. Similar arguments as in the proof of Theorem \ref{main_thm} show that it is a universal action within the class of actions of countable locally finite groups on separable Banach spaces by linear isometries.\\

 We conclude with few more open questions.
\begin{question}
Does there exist a universal unitary representation of a universal locally finite group?
\end{question}

Of special interest is whether one can amalgamate unitary representations.
\begin{question}
Let $G_1$ and $G_2$ be two groups with a common subgroup $G_0$. Let $\pi_1$ and $\pi_2$ be two unitary representations of $G_1$, resp. $G_2$, on Hilbert spaces $H_1$, resp. $H_2$ with a common subspace $H_0$, on which the restrictions $\pi_1\upharpoonright G_0$ and $\pi_2\upharpoonright G_0$ are equivalent. Does there exist an amalgam of these representations? Does this problem have a positive answer when the groups and spaces in question are finite, resp. finite-dimensional?
\end{question}


\end{document}